\documentclass[10pt,reqno]{amsart}
\usepackage{amsaddr}
\usepackage{graphicx}
\usepackage{xcolor}
\newcommand{\new}[1]{#1}
\usepackage{amssymb}
\usepackage{amsfonts}
\usepackage{amsmath,amsthm}
\usepackage{float}
\usepackage{setspace}
\usepackage[english]{babel}
\usepackage[backref=page]{hyperref}
\newtheorem{theorem}{Theorem}[section]
\newtheorem{lemma}[theorem]{Lemma}
\newtheorem{definition}[theorem]{Definition}
\newtheorem{proposition}[theorem]{Proposition}
\newtheorem{corollary}[theorem]{Corollary}

\theoremstyle{remark}
\newtheorem{remark}[theorem]{Remark}
\numberwithin{equation}{section}
\newcommand{\R}{\mathbb R}

\newcommand{\Com}{\mathbb{C}}
\newcommand{\om}{\omega}
\newcommand{\pd}{\partial}
\newcommand{\la}{\lambda}
\newcommand{\ga}{\gamma}
\newcommand{\re}{\operatorname{Re}}
\newcommand{\ima}{\operatorname{Im}}
\newcommand{\supp}{\operatorname{supp}}

\newcommand{\dist}{\operatorname{dist}}
\newcommand{\Res}{\operatorname{Res}}

\newcommand{\ve}{\varepsilon}
\newcommand{\be}{\begin{equation}}
\newcommand{\ee}{\end{equation}}

\newcommand{\lan}{\langle}
\newcommand{\ran}{\rangle}
\newcommand{\Lc}{\mathcal{L}}
\newcommand{\Kc}{\mathcal{K}}
\newcommand{\F}{\mathcal{F}}

\newcommand{\sig}{\sigma}
\begin{document}
\title[KdV on the half-line at $H^{-3/4}$]{Local well-posedness for the KdV equation on the half-line at the critical regularity $H^{-\frac34}$}

\author{Márcio Cavalcante}
\address{Instituto de Matemática\\ Universidade Federal de Alagoas, Brazil}
\email{marcio.melo@im.ufal.br}

\thanks{AMS Subject Classifications: 35Q53, 35G31, 35A01.}

\maketitle

\begin{abstract}
We prove local well-posedness for the initial-boundary-value problem for the Korteweg--de Vries equation on the right half-line at the critical regularity $s=-3/4$, with initial and boundary data in $H^{-3/4}(\mathbb R_+)$ and $H^{1/12}(\mathbb R_+)$, respectively. This reaches the endpoint left open in the previous half-line results of Holmer (2006) and Bona, Sun, and Zhang (2006), who established local well-posedness for $s>-3/4$. The proof is based on a contraction argument in the Besov-type Bourgain space introduced by Kishimoto. The main difficulty is to construct a boundary forcing operator compatible with the endpoint $b=1/2$ and the required $\ell^1$-summability in the modulation variable. We construct such an operator using a Laplace-transform representation of the linear half-line problem and a carefully designed extension to $x<0$. The extension combines the required regularity across the boundary with prescribed vanishing spatial moments, yielding favorable low-frequency behavior of the associated kernel. A pointwise kernel estimate then provides the crucial dyadic summability needed for the $X^{-3/4,1/2,1}$ estimate. Together with a time-trace estimate for the Duhamel term at $b=1/2$, this yields the critical local well-posedness result. The ideas used here have prospects to be used in other nonlinear dispersive equations on the half-lines.
\end{abstract}

\section{Introduction}

Initial-boundary value problems for nonlinear dispersive equations on half-lines have attracted considerable attention in recent years, and become genuinely difficult at low Sobolev regularity.

In this paper we study the initial-boundary-value problem (IBVP) for the Korteweg--de Vries (KdV) equation posed on the right half-line,
\begin{equation}\label{IBVP}
	\begin{cases}
		\partial_tu+\partial_x(\partial_x^2 u+u^2)=0  ,& (x,t)\in \mathbb R^+\times(0,T),\\
		u(x,0)=u_0(x),                                   & x\in \mathbb R^+,\\
		u(0,t)=f(t),                                     & t\in(0,T),
	\end{cases}
\end{equation}
in the setting
\begin{equation}\label{setting_A}
(u_0,f)\in H^{-3/4}(\R^+)\times H^{\frac{1}{12}}(\R^+).
\end{equation}
The relation between the two regularities in \eqref{setting_A} is dictated by the localized smoothing effect of Kenig, Ponce and Vega \cite{KPV} for the Airy group $e^{-t\pd_x^3}$,
\begin{align*}
&\|\psi(t) e^{-t\partial_x^3}\phi(x)\|_{C\big(\mathbb{R}_x;\; H^{(s+1)/3}(\mathbb{R}_t)\big)}\lesssim \|\phi\|_{H^s(\mathbb{R})},
\end{align*}
$\psi$ a smooth time cutoff, which is sharp. As pointed out by Zabusky and Galvin \cite{Zabusky}, IBVPs of the form \eqref{IBVP} model waves generated by a wave maker in a channel, or waves approaching shallow water from deep water.

The IBVP \eqref{IBVP} has been extensively studied, following the works of Ton \cite{Ton}, Bona and Winther \cite{BW,BW2}, Faminskii \cite{Fa,Fa2}, Bona, Sun and Zhang \cite{BSZ1,BSZ2}, Colliander and Kenig \cite{CK}, Holmer \cite{Holmer}, Fokas \cite{Fokas1}, Fokas, Himonas and Mantzavinos \cite{Fokas2}, and Himonas and Yan \cite{HY}. Global well-posedness holds for $s\ge0$ in the setting \eqref{setting_A} (up to an arbitrarily small loss in the boundary datum when $s<1$), by Faminskii \cite{Fa2}, and the best local result is local well-posedness for $s>-\frac34$, obtained independently by Holmer \cite{Holmer} (via an analytic family of Duhamel boundary forcing operators and Bourgain spaces $X^{s,b}$ with $b<\frac12$) and by Bona, Sun and Zhang \cite{BSZ1,BSZ2} (via a Laplace transform representation of the linear solution). Fokas \cite{Fokas1,Fokas2}, and, through the unified transform method, Himonas and Yan \cite{HY} and Gallego and Kwak \cite{GK}; see also Compaan and Tzirakis \cite{CT} for the quartic gKdV. The endpoint $s=-\frac34$, which for the Cauchy problem on the line was settled by Kishimoto \cite{Kishimoto} and Guo \cite{Guo}, remained open on the half-line (see \cite{Cav} for a formulation of this question and of the strategy pursued here); it is the subject of the present paper.

For the Cauchy problem on the line,
\begin{equation}\label{pure}
\left\{\begin{array}{l}
\partial_{t} u+\partial_{x}^{3} u+\pd_x( u^2)=0, \quad x, t \in \mathbb{R},  \\
u(x, 0)=u_{0}(x),
\end{array}\right.
\end{equation}
the iteration method in Bourgain spaces gives local well-posedness for $s>-\frac34$ \cite{KPV96}, the bilinear estimate fails at $s=-\frac34$ for every $b$ \cite{NTT}, and the data-to-solution map fails to be uniformly continuous for $s<-\frac34$ \cite{CCT}. The endpoint $s=-\frac34$ was reached by the iteration method by Kishimoto \cite{Kishimoto} and, independently, by Guo \cite{Guo}, by replacing $X^{-\frac34,\frac12}$ with an $\ell^1$-Besov modification in high frequency and a further modification in low frequency; the corresponding bilinear estimate is Proposition \ref{prop:kishimoto} below. See also \cite{Killip} for the sharp result $s\ge-1$ by completely different methods.

\subsection{Main result}

Throughout, $s=-\frac34$ and $\ga=\frac{s+1}{3}=\frac1{12}$. Let $X$ be Kishimoto's space (Section \ref{sec:spaces}) and $X_T$ its restriction to $[-T,T]$.

\begin{definition}\label{def:solution}
Let $T>0$, $u_0\in H^{-3/4}(\R^+)$, $f\in H^{1/12}(\R^+)$. A function $u$ defined on $\R\times[-T,T]$ is a \emph{distributional solution} of \eqref{IBVP} on $[0,T]$ if
\begin{enumerate}
\item[(a)] (\emph{well-defined nonlinearity}) $u\in X_T$; by Proposition \ref{prop:kishimoto} below, $\pd_x(u^2)$ is then a well-defined distribution on $\R\times(-T,T)$;
\item[(b)] $\pd_t u+\pd_x^3u+\pd_x(u^2)=0$ in the sense of distributions on $(0,+\infty)\times(0,T)$;
\item[(c)] (\emph{space traces}) $u\in C([-T,T];H^{-3/4}(\R))$ and $u(\cdot,0)|_{\R^+}=u_0$ in $H^{-3/4}(\R^+)$;
\item[(d)] (\emph{time traces}) $u\in C(\R_x;H^{1/12}(0,T))$ and $u(0,\cdot)=f$ in $H^{1/12}(0,T)$.
\end{enumerate}
\end{definition}

At the endpoint, however, the Besov refinement creates a new difficulty for the half-line problem: the boundary forcing operator must be compatible with the $\ell^1$-summability in the modulation variable, and the existing boundary forcing constructions do not provide the required $\ell^1$-Besov estimate at $b=\frac12$. We overcome this obstruction by constructing an explicit boundary forcing operator whose extension to $x<0$ incorporates moment cancellations and yields the required $X^{-3/4,\frac12,1}$ estimate. We now state the main result of this work.

\begin{theorem}\label{thm:main}
Let $u_0\in H^{-3/4}(\R^+)$ and $f\in H^{1/12}(\R^+)$. There exist $T>0$, depending only on $\|u_0\|_{H^{-3/4}(\R^+)}+\|f\|_{H^{1/12}(\R^+)}$, and a distributional solution $u$ of \eqref{IBVP} on $[0,T]$ in the sense of Definition \ref{def:solution}. The map $(u_0,f)\mapsto u$ is Lipschitz (indeed analytic) from a neighbourhood of the data in $H^{-3/4}(\R^+)\times H^{1/12}(\R^+)$ into $X_T\cap C([0,T];H^{-3/4}(\R))\cap C(\R_x;H^{1/12}(0,T))$.
\end{theorem}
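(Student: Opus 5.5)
The proof follows the Colliander--Kenig/Holmer strategy: recast the IBVP as a forced Cauchy problem on $\R$ and run a contraction in Kishimoto's space $X$. First, extend $u_0$ to $\tilde u_0\in H^{-3/4}(\R)$ with $\|\tilde u_0\|_{H^{-3/4}(\R)}\lesssim\|u_0\|_{H^{-3/4}(\R^+)}$. Then look for $u$ as a fixed point of
\[
\Lambda u(t)=\psi(t)e^{-t\pd_x^3}\tilde u_0-\psi_T(t)\mathcal D(\pd_x u^2)(t)+\psi(t)\mathcal L\big(h\big)(t),\qquad h=f-\big(\psi e^{-t\pd_x^3}\tilde u_0\big)\big|_{x=0}+\mathcal D(\pd_xu^2)\big|_{x=0}.
\]
Here $\mathcal D$ is the Duhamel operator and $\mathcal L$ is a boundary forcing operator. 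By construction, $\mathcal L g$ solves the Airy equation on $\{x>0\}$ with $(\mathcal L g)(0,t)=g(t)$ and vanishes at $t=0$ for $x>0$. Any fixed point then satisfies (b)--(d) of Definition \ref{def:solution}, because on $x>0$ all three terms solve the equation, and the boundary values are corrected exactly by the choice of $h$.

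The linear estimates are needed in the order below, with $\gamma=1/12$.
\begin{enumerate}
\item[(i)] $\|\psi e^{-t\pd_x^3}\phi\|_{X}\lesssim\|\phi\|_{H^{-3/4}}$. This is standard for the Besov space at $b=1/2$.
\item[(ii)] $\|\psi_T\mathcal D F\|_X\lesssim\|F\|_{Y}$, where $Y$ is the dual-type space in which Proposition \ref{prop:kishimoto} places $\pd_x(u^2)$.
\item[(iii)] The space and time traces: $\psi e^{-t\pd_x^3}\phi\in C(\R_x;H^{1/12})$ by the Kenig--Ponce--Vega smoothing estimate, and $\mathcal D F\in C(\R_x;H^{1/12}_t)$ with $\|\mathcal DF\|_{C_xH^{1/12}_t}\lesssim\|F\|_Y$.
\item[(iv)] $\|\psi\mathcal L g\|_{X\cap C_tH^{-3/4}\cap C_xH^{1/12}}\lesssim\|g\|_{H^{1/12}_0(\R^+)}$.
\end{enumerate}
For (iii), the Duhamel time trace at $b=1/2$ with the $\ell^1$ modulation structure needs a dyadic argument. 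Write $\mathcal DF$ on the Fourier side and split into the regions $|\tau-\xi^3|\lesssim|\xi|^3$ and $|\tau-\xi^3|\gg|\xi|^3$. In the first region the smoothing estimate applies piece by piece on each modulation dyadic block, and the blocks are summed using the $\ell^1$ norm. The second region is handled by direct integration in $\xi$. For (iv), note that $h$ vanishes at $t=0$ to the needed order in $H^{1/12}$: since $\gamma<1/2$, no compatibility condition is required, and $h$ can be extended by zero.

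Estimate (iv) is the main obstacle. I would build $\mathcal L$ from the Laplace-transform solution of Bona--Sun--Zhang, $\mathcal L g(x,t)=\frac1{2\pi i}\int e^{\lambda t}e^{\mu(\lambda)x}\hat g(\lambda)\,d\lambda$ on $x>0$, where $\mu(\lambda)$ is the decaying root of $\mu^3+\lambda=0$. Rewritten on the imaginary axis, this becomes $\int e^{it\tau}e^{-x\,\omega(\tau)}\hat g(\tau)\,d\tau$ with $\re\,\omega>0$. I would extend to $x<0$ by a reflection combination $\sum_j c_j\,\Phi(-a_jx)$ of the same profile. The coefficients would be chosen so that the extension matches derivatives up to order $2$ at $x=0$, making $\mathcal Lg$ regular enough across the boundary, and so that finitely many spatial moments $\int x^k\,\mathcal Lg\,dx$ vanish. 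The moment cancellation makes the spatial Fourier transform of the kernel vanish to high order at $\xi=0$, and this controls the low-frequency part of Kishimoto's norm.

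The key computation is then a pointwise bound on the space--time Fourier transform of the kernel, of the form $|\widehat{\mathcal Lg}(\xi,\tau)|\lesssim |\hat g(\tau)|\,|\tau|^{1/3}\,\min\big(1,\frac{|\xi|^N}{|\tau|^{N/3}}\big)\,\langle \tau-\xi^3\rangle^{-1}$. Summing this over dyadic blocks $|\tau-\xi^3|\sim 2^j$ with weight $2^{j/2}$ in $\ell^1_j$ gives a convergent geometric series in $j$ relative to $|\tau|$. That yields $\|\mathcal Lg\|_{X^{-3/4,1/2,1}}\lesssim\|g\|_{H^{1/12}}$, with the $\ell^1$ summation being exactly where the decay from the pointwise bound is used. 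The continuity properties follow from the explicit formula. Once (i)--(iv) hold, combine them with Proposition \ref{prop:kishimoto} and the smallness factor $T^{\theta}$ gained from the time cutoff (or scale the data small instead). This shows $\Lambda$ is a contraction on a ball of $X_T$, and analyticity of the solution map follows since $\Lambda$ is polynomial in $u$ and in the data.
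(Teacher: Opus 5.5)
Your overall scheme (extending the data, a Duhamel formulation with a boundary forcing term, Kishimoto's bilinear estimate, and a $b=\frac12$ trace lemma for the Duhamel term) matches the paper's. The gap is in the step you call the key computation for (iv).

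\textbf{The claimed bound is false.} The bound $|\widehat{\mathcal Lg}(\xi,\tau)|\lesssim|\hat g(\tau)|\,|\tau|^{1/3}\min(1,|\xi|^N|\tau|^{-N/3})\lan\tau-\xi^3\ran^{-1}$ cannot hold for any operator with $\mathcal Lg(0,\cdot)=g$. The trace condition reads $\frac1{2\pi}\int\widehat{\mathcal Lg}(\xi,\tau)\,d\xi=\hat g(\tau)$. The right-hand side of your bound, however, integrates in $\xi$ to $O(|\hat g(\tau)|\,|\tau|^{-1/3}\log\lan\tau\ran)$ for large $|\tau|$, with the logarithm coming from $\xi$ near the real root $\tau^{1/3}$.

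\textbf{The corrected bound still fails.} The scale-invariant version of a bound of this shape is $|\hat g(\tau)|\,|\tau|^{2/3}\lan\tau-\xi^3\ran^{-1}$. This is exactly the size, near the characteristic curve, of the symbols of the point-source operators of Colliander--Kenig and Holmer, and for it your summation fails. The part of the shell $\{\lan\tau-\xi^3\ran\sim2^k\}$ near the root has $\xi$-measure $\sim2^k|\tau|^{-2/3}$. It therefore contributes $|\hat g(\tau)|$ times $2^{k/2}\cdot|\tau|^{s/3}\cdot|\tau|^{2/3}2^{-k}\cdot(2^k|\tau|^{-2/3})^{1/2}=|\tau|^{\frac{s+1}3}$ for every $k$ with $2^k\lesssim|\tau|$. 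The terms do not decay in $k$, so the $\ell^1_k$ sum costs a factor $\log\lan\tau\ran$. This is precisely the $b=\frac12$ obstruction described in Section \ref{sec:strategy}, and it is the whole difficulty of the theorem.

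\textbf{The missing idea.} A decaying extension to $x<0$ produces no factor $\lan\tau-\xi^3\ran^{-1}$ at all, and your construction can be kept. In the paper's notation take $\Phi(y)=e^{i\om\rho(\tau)y}$ and $a_j>0$, with $\sum_jc_j=1$ from continuity at $x=0$. At fixed $\tau$ your extension has symbol $\hat g(\tau)\rho(\tau)^{-1}R(\zeta)$, where $\zeta=\xi/\rho(\tau)$ and $R(\zeta)=\frac{-i}{\zeta-\om}+\sum_j\frac{ic_j}{\zeta+a_j\om}$. The poles $\om$ and $-a_j\om$ all lie at distance $\gtrsim1$ from the line $\Gamma_\tau$ traced by $\zeta$. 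Hence $R$ is bounded at the real root, and the trace comes from the residue at the complex root $\om\rho(\tau)$.

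What you must prove is therefore the bound \eqref{eq:Kbound}, $|K|\lesssim|\tau|^{-1/3}|\zeta|^m\lan\zeta\ran^{-n}$, with no modulation factor. With it, the shell near the root contributes $2^k|\tau|^{\frac{s-2}3}$. This is an increasing geometric series dominated by $2^k\sim|\tau|$, summing to $|\tau|^{\frac{s+1}3}$. For $|\xi|\ll|\tau|^{1/3}$ and $|\xi|\gg|\tau|^{1/3}$ you need $m>-s-\frac12$ (one vanishing moment) and $n>m+s+2$, as in the proof of Proposition \ref{prop:L}(e).

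\textbf{Smaller points.}
\begin{enumerate}
\item[(1)] Since $\int|K(\xi,\tau)|^2d\xi\sim|\tau|^{-1/3}$, the region $|\tau|\lesssim1$ cannot be bounded by $\|g\|_{H^{1/12}}$ alone. It needs the compact time support of the datum: your cutoff $\psi$ together with causality, or $\supp h\subset[0,2]$ as in Remark \ref{rem:support}.
\item[(2)] In (iii), applying the smoothing estimate to modulation blocks taken over all frequencies and then summing in $k$ only controls the larger $\ell^1_k\ell^2_j$ norm. You must use that, near the root, a fixed $\tau$ meets only $O(1)$ bands $A_j$, so the sum is taken in the order $\ell^2_j\ell^1_k$ of \eqref{eq:Xsb1}.
\item[(3)] At $b=\frac12$ these estimates leave no room for a factor $T^\theta$. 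Close the contraction for small data and reach large data by scaling, which is your second alternative.
\end{enumerate}
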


Theorem \ref{thm:main} is the half-line counterpart of Kishimoto's result \cite{Kishimoto} for \eqref{pure}, and the endpoint of Holmer's Theorem 1.3(a) \cite{Holmer}. As in \cite{Holmer}, uniqueness is understood as uniqueness of the fixed point of the integral equation \eqref{eq:scheme} below; unconditional uniqueness in $X_T$ is not addressed (for $s>-\frac34$ see \cite{BSZ2}).

\subsection{Strategy and the obstruction at $b=\frac12$}\label{sec:strategy}

The proof follows the scheme: seek $u$ on the whole line as a fixed point of
\begin{equation}\label{eq:scheme}
u=\psi(t)e^{-t\pd_x^3}\tilde u_0-\psi(t)\Kc\pd_x(u^2)+\Lc h,
\end{equation}
where $\tilde u_0$ extends $u_0$, $\Kc$ is the Duhamel operator, $\Lc$ is a boundary forcing operator producing solutions of the linear equation on $x>0$ with zero initial data and prescribed trace $h$ at $x=0$, and $h$ is chosen so that $u(0,t)=f(t)$. The fixed point space is Kishimoto's $X_T\cap C_tH^{-3/4}$, and the nonlinear term is controlled by Kishimoto's bilinear estimate. Two linear estimates are therefore needed beyond \cite{Kishimoto}: $\Lc$ must be bounded from $H^{1/12}$ into $X\cap C_tH^{-3/4}$, and the trace $\Kc\pd_x(u^2)|_{x=0}$ must be controlled in $H^{1/12}$ by the norm in which Kishimoto measures the nonlinearity.

The second estimate is Lemma \ref{lem:trace}; it is the $b=\frac12$, $\ell^1$-Besov version of Holmer's Lemma 5.6(b). The first is the real issue. The boundary forcing operators of Colliander--Kenig \cite{CK} and Holmer \cite{Holmer} are Duhamel integrals of a source $\delta_0(x)$ (or $x_-^{\la-1}/\Gamma(\la)$) placed at the boundary; their space-time Fourier transforms contain the factor $(\tau-\xi^3)^{-1}$ with a coefficient that does \emph{not} vanish on the real characteristic curve $\tau=\xi^3$. Such a function has, at fixed $\tau$, mass $\sim 2^{-k}$ on each dyadic shell $\{\lan\tau-\xi^3\ran\sim 2^k\}$, hence is bounded in $X^{s,b}$ only for $b<\frac12$ (this is Holmer's Lemma 5.8(d) and Remark 2.3 there), while Kishimoto's space requires $b=\frac12$ and summability in $k$. This is the reason why the direct combination of \cite{Holmer} and \cite{Kishimoto} does not close. The extensions $BI^{m1},BI^{m2}$ of \cite{BSZ2} do reach $b=\frac12$ in $X^{s,b}$; they are defined through several Fourier cutoffs and a corrector chosen by an implicit equation ((2.22) there), and the $\ell^1_k$ summability, which requires a pointwise understanding of the symbol on each dyadic shell around the characteristic curve, is not addressed there. This is not a technicality: at $s=-\frac34$ the bilinear estimate fails in $X^{-\frac34,\frac12}$ whatever the low-frequency modification \cite{NTT,Kishimoto2}, and $X^{s,\frac12}$ does not embed in $C_tH^s$, so the $\ell^1$-Besov refinement of \cite{Kishimoto} is forced. The same applies to the cutoff extensions of \cite{ET,HY,CT,GK}, which also reach $b=\frac12$ in $X^{s,b}$; a detailed comparison with all these constructions is given in Remark \ref{rem:holmer}.

The way out is to notice that, formally, the trace at $x=0$ of a Duhamel solution $\Kc F$ with $\supp F\subset\{x\le0\}$ is the integral $\int\widehat F(\xi,\tau)(\tau-\xi^3)^{-1}d\xi$, which can be nonzero even if $\widehat F$ vanishes on the real root $\xi=\tau^{1/3}$: it then picks up the residue at the \emph{complex} root $\xi=\om\,\tau^{1/3}$, $\om=e^{2\pi i/3}$, the decaying mode of the linear equation. We therefore define $\Lc h$ (Section \ref{sec:derivation}) as the exact solution $\frac1{2\pi}\int e^{it\tau}\widehat h(\tau)e^{ix\om(\tau-i0)^{1/3}}d\tau$ of the linear problem on $x>0$, the Laplace-transform formula of \cite{BSZ1}, continued at $x=0$ to an explicit decaying profile on $x<0$ with a prescribed number of derivatives. On the Fourier side this is a symbol $\widehat h(\tau)K(\xi,\tau)$ whose only pole in the closed upper half-plane, at fixed $\tau$, is a simple pole at $\xi=\om\,\tau^{1/3}$ (Definition \ref{def:L}). Hence $(\tau-\xi^3)K$ is analytic in the upper half-plane, so the forcing is supported in $x\le0$; the residue gives the trace; analyticity in $\tau$ in the lower half-plane gives causality; and, since $K$ is bounded near the real root, $\Lc h\in X^{s,\frac12,1}$ with norm $\sim\|h\|_{H^{(s+1)/3}}$.

\section{Notation and function spaces}\label{sec:spaces}

\subsection{Fourier transforms and complex powers}
We use the conventions
$$\widehat f(\xi)=\int_\R e^{-ix\xi}f(x)\,dx,\qquad \widehat u(\xi,\tau)=\iint_{\R^2}e^{-i(x\xi+t\tau)}u(x,t)\,dx\,dt,$$
and $\F^{-1}$ for the inverse transforms (with the factors $(2\pi)^{-1}$, $(2\pi)^{-2}$). We write $\lan\cdot\ran=(1+|\cdot|^2)^{1/2}$ and $\sig=\tau-\xi^3$. Constants $C$ may change from line to line; $A\lesssim B$ means $A\le CB$.

For $z\in\Com$ with $\ima z\le0$, $z\ne0$, and $\mu\in\R$, set $z^\mu:=|z|^\mu e^{i\mu\arg z}$ with $\arg z\in[-\pi,0]$. The function $z\mapsto z^\mu$ is holomorphic in $\{\ima z<0\}$ and continuous on $\{\ima z\le0\}\setminus\{0\}$; its restriction to $\R\setminus\{0\}$ is denoted $(\tau-i0)^\mu$:
\begin{equation}\label{eq:power}
(\tau-i0)^\mu=|\tau|^\mu\ \ (\tau>0),\qquad (\tau-i0)^\mu=|\tau|^\mu e^{-i\pi\mu}\ \ (\tau<0).
\end{equation}
This is the convention of \cite[(2.2)]{Holmer} (e.g.\ $\widehat{\mathbf 1_{t>0}}(\tau)=\int_0^\infty e^{-it\tau}dt=-i(\tau-i0)^{-1}$); the identity is not used below and only serves to fix the branch. We shall use
\begin{equation}\label{eq:rho}
\rho(\tau):=(\tau-i0)^{1/3},\qquad \rho(\tau)^3=\tau,\qquad \rho(\tau)=\begin{cases}|\tau|^{1/3}, & \tau>0,\\ |\tau|^{1/3}e^{-i\pi/3}, & \tau<0.\end{cases}
\end{equation}

\subsection{Sobolev spaces on the half-line}
 $H^s(\R)$ is the usual Sobolev space. For $s\ge0$, $\phi\in H^s(\R^+)$ if $\phi=\tilde\phi|_{\R^+}$ for some $\tilde\phi\in H^s(\R)$, with $\|\phi\|_{H^s(\R^+)}=\inf\|\tilde\phi\|_{H^s(\R)}$; $H^s_0(\R^+)$ is the space of $\phi\in H^s(\R)$ supported in $[0,+\infty)$; for $s<0$, $H^s(\R^+)$ and $H^s_0(\R^+)$ are defined by duality. Every $\phi\in H^s(\R^+)$ has an extension $\tilde\phi\in H^s(\R)$ with $\|\tilde\phi\|_{H^s(\R)}\le 2\|\phi\|_{H^s(\R^+)}$. $C_0^\infty(\R^+)$ denotes smooth functions on $\R$ supported in $[0,+\infty)$ and $C^\infty_{0,c}(\R^+)$ those with compact support; $C^\infty_{0,c}(\R^+)$ is dense in $H^s_0(\R^+)$ for all $s$. We shall need two facts.

\begin{lemma}[{\cite[Lemma 3.5]{JK}}, {\cite[Lemma 4.2]{Holmer}}]\label{lem:JK}
If $-\frac12<\alpha<\frac12$, then $\|\chi_{(0,+\infty)}g\|_{H^\alpha(\R)}\le c(\alpha)\,\|g\|_{H^\alpha(\R)}$ for all $g\in H^\alpha(\R)$.
\end{lemma}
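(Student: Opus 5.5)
The plan is to prove the lemma by duality combined with a decomposition of $g$ into frequency pieces, with the case $0\le\alpha<\frac12$ as the core. I would handle that case first and then get $-\frac12<\alpha<0$ by duality. For $0\le\alpha<\frac12$ I would use the Gagliardo (Slobodeckij) form of the homogeneous seminorm,
$$\|g\|_{\dot H^\alpha}^2\simeq\iint\frac{|g(x)-g(y)|^2}{|x-y|^{1+2\alpha}}\,dx\,dy,\qquad 0<\alpha<1,$$
together with the fact that multiplication by $\chi_{(0,+\infty)}$ is trivially bounded on $L^2$. Write $h=\chi_{(0,+\infty)}g$. Splitting the double integral for $h$ according to the signs of $x,y$, the region $x,y>0$ is bounded by the same integral for $g$, and the region $x,y<0$ contributes zero. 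The cross region $x>0>y$ contributes
$$2\int_0^\infty|g(x)|^2\int_{-\infty}^0\frac{dy}{(x-y)^{1+2\alpha}}\,dx= c_\alpha\int_0^\infty\frac{|g(x)|^2}{x^{2\alpha}}\,dx .$$
So everything reduces to the fractional Hardy inequality $\int_{\R}|x|^{-2\alpha}|g|^2\,dx\lesssim\|g\|_{H^\alpha}^2$, valid exactly for $0\le\alpha<\frac12$.

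The key step, and the main obstacle, is this Hardy-type bound. I would prove it on the Fourier side: $|x|^{-2\alpha}$ is locally integrable for $\alpha<\frac12$, with Fourier transform $c|\xi|^{2\alpha-1}$. It then suffices to show that the operator with kernel $|\xi-\eta|^{2\alpha-1}$ is bounded between the weights $\lan\xi\ran^{2\alpha}$ on both sides. Equivalently, I would bound the bilinear form
$$\iint\frac{|\widehat g(\xi)||\widehat g(\eta)|}{|\xi-\eta|^{1-2\alpha}}\,d\xi\,d\eta\lesssim\int|\xi|^{2\alpha}|\widehat g(\xi)|^2\,d\xi .$$
This is a Schur test with the test function $|\xi|^{-\beta}$, using homogeneity and a suitable $\beta$; it works precisely because $0<2\alpha<1$.

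An alternative route, which avoids kernels, is a Littlewood--Paley argument. I would write $g=\sum_N P_Ng$. For $|x|\gtrsim N^{-1}$ I would use $|x|^{-\alpha}\lesssim N^{\alpha}$. For $|x|\lesssim N^{-1}$ I would use Bernstein, $\|P_Ng\|_{L^\infty}\lesssim N^{1/2}\|P_Ng\|_{L^2}$, together with $\int_{|x|<N^{-1}}|x|^{-2\alpha}\,dx\sim N^{2\alpha-1}$, which is again finite exactly when $\alpha<\frac12$. Summing over dyadic $N$ needs some almost-orthogonality, which I would handle with Cauchy--Schwarz and a small loss absorbed by the condition $\alpha<\frac12$. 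Since the inequality then holds with $|x|^{-2\alpha}$ replaced by $\min(1,|x|^{-2\alpha})$, this also covers the inhomogeneous norm.

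For $-\frac12<\alpha<0$ I would use duality. Multiplication by $\chi_{(0,+\infty)}$ is self-adjoint for the $L^2$ pairing, and $H^{\alpha}$ is the dual of $H^{-\alpha}$ with $0<-\alpha<\frac12$. Hence boundedness on $H^{-\alpha}$ transfers to $H^\alpha$ with the same constant, which gives the full range $-\frac12<\alpha<\frac12$. The constant $c(\alpha)$ blows up as $|\alpha|\to\frac12$, reflecting the non-integrability of $|x|^{-1}$; this is consistent with the sharpness of the range.
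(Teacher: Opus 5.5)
The paper gives no proof of this lemma; it only quotes it from \cite{JK} and \cite{Holmer}. Your main argument is a correct, self-contained proof. The Gagliardo identity reduces $\|\chi_{(0,+\infty)}g\|_{\dot H^\alpha}^2$ to $\|g\|_{\dot H^\alpha}^2$ plus $c_\alpha\int_0^\infty x^{-2\alpha}|g|^2\,dx$. Your Schur test does prove the required Hardy (Pitt) inequality. For the kernel $|\xi|^{-\alpha}|\xi-\eta|^{2\alpha-1}|\eta|^{-\alpha}$ acting on $|\xi|^\alpha|\widehat g|$, the test function $|\eta|^{-\beta}$ is admissible exactly when $\alpha<\beta<1-\alpha$. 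Such a $\beta$ exists iff $\alpha<\frac12$; for example, $\beta=\frac12$ works. Duality then covers $-\frac12<\alpha<0$, with multiplication by $\chi_{(0,+\infty)}$ defined first on Schwartz functions and extended by density.

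A common shortcut works on the Fourier side. Since $\widehat{\chi_{(0,+\infty)}}=\pi\delta-i\,\mathrm{p.v.}\frac1\xi$, one has $\widehat{\chi_{(0,+\infty)}g}=\frac12\big(\widehat g-iH\widehat g\big)$, with $H$ the Hilbert transform in $\xi$. So the lemma is equivalent to boundedness of $H$ on $L^2(\lan\xi\ran^{2\alpha}d\xi)$, i.e.\ to $\lan\xi\ran^{2\alpha}\in A_2$, i.e.\ to $|\alpha|<\frac12$. That route treats both signs of $\alpha$ at once and shows the range is sharp, but it relies on weighted singular-integral theory. Yours is elementary, and for $0<\alpha<\frac12$ it also gives the homogeneous $\dot H^\alpha$ bound.

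Your Littlewood--Paley alternative does not close as written. Bounding each piece by $N^\alpha\|P_Ng\|_{L^2}$ and summing over $N$ gives the Besov quantity $\sum_N N^\alpha\|P_Ng\|_{L^2}$, which $\|g\|_{H^\alpha}$ does not control. A Cauchy--Schwarz loss in $N$ cannot be absorbed, because the target is exactly $H^\alpha$.

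To repair it, localise also in $x$ to the annulus $|x|\sim 2^{-k}$. There $P_Ng$ contributes at most $C\min\{(2^k/N)^{\alpha},(N/2^k)^{\frac12-\alpha}\}N^\alpha\|P_Ng\|_{L^2}$. Schur's test in $(k,N)$ then closes, and this is where both $\alpha>0$ and $\alpha<\frac12$ enter. Since your Schur-test route is already complete, this matters only if you prefer the alternative.
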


\begin{lemma}\label{lem:cutoff}
Let $\alpha\ge0$ and $\theta\in C_c^\infty(\R)$. Then $\|\theta g\|_{H^\alpha(\R)}\le C\|\theta\|_{H^{\alpha+1}(\R)}\|g\|_{H^\alpha(\R)}$.
\end{lemma}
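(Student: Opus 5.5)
We prove the bound on the Fourier side, writing $\|\theta g\|_{H^\alpha}=\|\lan\xi\ran^\alpha\,\widehat{\theta g}(\xi)\|_{L^2_\xi}$ and $\widehat{\theta g}=(2\pi)^{-1}\widehat\theta*\widehat g$. The key input is Peetre's inequality: for $\alpha\ge0$,
$$\lan\xi\ran^\alpha\le 2^{\alpha/2}\lan\xi-\eta\ran^\alpha\lan\eta\ran^\alpha\qquad(\xi,\eta\in\R).$$
This inequality follows from $1+|\xi|^2\le 2(1+|\xi-\eta|^2)(1+|\eta|^2)$, which is a consequence of $|\xi|\le|\xi-\eta|+|\eta|$. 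Using it inside the convolution gives the pointwise bound
$$\lan\xi\ran^\alpha|\widehat{\theta g}(\xi)|\lesssim \int_\R \lan\xi-\eta\ran^\alpha|\widehat\theta(\xi-\eta)|\,\lan\eta\ran^\alpha|\widehat g(\eta)|\,d\eta .$$

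Next we apply Young's inequality $L^1*L^2\subset L^2$. This yields
$$\|\theta g\|_{H^\alpha}\lesssim\|\lan\cdot\ran^\alpha\widehat\theta\|_{L^1}\,\|g\|_{H^\alpha}.$$
The remaining step is to bound the $L^1$ norm by the $H^{\alpha+1}$ norm. By Cauchy--Schwarz,
$$\|\lan\cdot\ran^\alpha\widehat\theta\|_{L^1}\le\|\lan\cdot\ran^{-1}\|_{L^2}\,\|\lan\cdot\ran^{\alpha+1}\widehat\theta\|_{L^2}\lesssim\|\theta\|_{H^{\alpha+1}}.$$
Here $\lan\xi\ran^{-1}\in L^2(\R)$ because $\int\lan\xi\ran^{-2}d\xi=\pi$. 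Since $\theta\in C_c^\infty$, the right-hand side is finite, and this gives the stated bound.

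There is no real obstacle in this argument. The only points to check are that Peetre's inequality needs $\alpha\ge0$, which the hypothesis provides, and that exactly one extra derivative is what makes $\lan\xi\ran^{-1}$ square integrable in one dimension. The constant $C$ depends only on $\alpha$. The argument also extends by density from Schwartz $g$ to all $g\in H^\alpha(\R)$.
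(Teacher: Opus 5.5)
Your proof is correct and is the same argument as the paper's. Both pass to the Fourier side, use the weight inequality $\lan\xi\ran^\alpha\lesssim\lan\xi-\eta\ran^\alpha\lan\eta\ran^\alpha$, apply Young's inequality $L^1*L^2\subset L^2$, and finish with Cauchy--Schwarz using $\lan\cdot\ran^{-1}\in L^2(\R)$. Your factor $2^{\alpha/2}$ in Peetre's inequality is actually needed: the paper states the inequality without a constant, and that version fails for every $\alpha>0$ at $\xi=1$, $\eta=\frac12$. This only changes the constant $C$.
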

\begin{proof}
$\widehat{\theta g}=(2\pi)^{-1}\widehat\theta*\widehat g$ and $\lan\tau\ran^\alpha\le\lan\tau-\tau'\ran^\alpha\lan\tau'\ran^\alpha$, so $\|\theta g\|_{H^\alpha}\le(2\pi)^{-1}\|\lan\tau\ran^\alpha\widehat\theta\|_{L^1}\|g\|_{H^\alpha}$, and $\|\lan\tau\ran^\alpha\widehat\theta\|_{L^1}\le\|\lan\tau\ran^{-1}\|_{L^2}\|\theta\|_{H^{\alpha+1}}$.
\end{proof}

\subsection{Bourgain and Besov--Bourgain spaces}
For $s,b\in\R$, $X^{s,b}$ is the completion of $\mathcal S(\R^2)$ under
$$\|u\|_{X^{s,b}}=\|\lan\xi\ran^s\lan\tau-\xi^3\ran^b\widehat u\|_{L^2_{\tau,\xi}}.$$
Following \cite{Kishimoto}, define the dyadic sets, for integers $j,k\ge0$,
\begin{equation}\label{besov}
A_j := \{ (\tau, \xi) : 2^j \leq \langle \xi \rangle < 2^{j+1} \}, \qquad
B_k := \{ (\tau, \xi) : 2^k \leq \langle \tau - \xi^3 \rangle < 2^{k+1} \},
\end{equation}
and the norm
\begin{equation}\label{eq:Xsb1}
\|u\|_{X^{s,b,1}} := \Big( \sum_{j\ge0} \Big( \sum_{k\ge0} \|\langle \xi \rangle^s \langle \tau - \xi^3 \rangle^b \hat{u} \|_{L^2_{\tau,\xi}(A_j\cap B_k)} \Big)^2 \Big)^{1/2}.
\end{equation}
Clearly $\|u\|_{X^{s,b}}\le\|u\|_{X^{s,b,1}}$, and $\|u\|_{X^{s,b,1}}\le C_{b'}\|u\|_{X^{s,b'}}$ for $b'>b$. Note that for $j\ge1$ and $(\tau,\xi)\in A_j$ one has $2^{j-1}\le|\xi|<2^{j+1}$, while $|\xi|<\sqrt3$ on $A_0$.

\begin{lemma}[{\cite[Section 2]{Kishimoto}}]\label{lem:embedding}
$\|u\|_{L^\infty_tH^s_x}\le(2\pi)^{-1}\|\lan\xi\ran^s\widehat u\|_{L^2_\xi L^1_\tau}\le C\|u\|_{X^{s,\frac12,1}}$, and $X^{s,\frac12,1}\hookrightarrow C(\R_t;H^s(\R_x))$.
\end{lemma}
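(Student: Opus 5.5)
The plan is to work entirely on the Fourier side, taking $\widehat u(\cdot,\tau)$ as the object of study. The inequality follows from Fourier inversion in $t$ alone. For fixed $t$ we have $\widehat{u(\cdot,t)}(\xi)=(2\pi)^{-1}\int e^{it\tau}\widehat u(\xi,\tau)\,d\tau$. This gives the pointwise bound $|\widehat{u(\cdot,t)}(\xi)|\le(2\pi)^{-1}\|\widehat u(\xi,\cdot)\|_{L^1_\tau}$, uniformly in $t$. Multiplying by $\lan\xi\ran^s$ and taking $L^2_\xi$, with the $(2\pi)^{-1}$ factor of the inverse spatial transform absorbed consistently into the definition of the $H^s$ norm, yields the first inequality. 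I will verify this first for $u\in\mathcal S(\R^2)$ and then extend it by density.

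For the second inequality, fix $\xi$ with $(\xi,\tau)\in A_j$ and decompose $\R_\tau$ according to the shells $B_k$. On $B_k$ the $\tau$-slice has measure at most $2\cdot2^{k+1}$, since $\lan\tau-\xi^3\ran<2^{k+1}$ forces $|\tau-\xi^3|<2^{k+1}$. Cauchy--Schwarz then gives
$$\int_{B_k}|\widehat u|\,d\tau\le C\,2^{k/2}\|\widehat u(\xi,\cdot)\|_{L^2_\tau(B_k)}\le C\|\lan\tau-\xi^3\ran^{1/2}\widehat u(\xi,\cdot)\|_{L^2_\tau(B_k)}.$$
Summing in $k$ gives $\|\widehat u(\xi,\cdot)\|_{L^1_\tau}\le C\sum_k a_k(\xi)$, where $a_k(\xi)$ is the slice norm above. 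Next I take $L^2_\xi$ over the slab $A_j$ after multiplying by $\lan\xi\ran^s$. Minkowski's inequality lets me move the $L^2_\xi$ norm inside the $k$-sum, which gives $\sum_k\|\lan\xi\ran^s\lan\tau-\xi^3\ran^{1/2}\widehat u\|_{L^2(A_j\cap B_k)}$. Squaring and summing over $j$, using that the $A_j$ partition the $\xi$-line, produces exactly $C\|u\|_{X^{s,\frac12,1}}^2$.

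For continuity, I note that $\mathcal S(\R^2)$ is dense in $X^{s,\frac12,1}$, which is the completion of Schwartz functions under this norm, so it suffices to check that Schwartz functions lie in $C(\R_t;H^s)$. The $L^\infty_tH^s_x$ bound just proved then shows that any Cauchy sequence of Schwartz functions converges uniformly in $t$ in $H^s$. Since a uniform limit of continuous maps is continuous, the limit lies in $C(\R_t;H^s)$, and the embedding is bounded. Alternatively, continuity can be seen directly by dominated convergence from $e^{it\tau}\to e^{it_0\tau}$, using the integrable majorant $\lan\xi\ran^s\|\widehat u(\xi,\cdot)\|_{L^1_\tau}\in L^2_\xi$.

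There is no serious obstacle. The only point needing care is the order of Minkowski's inequality: the $\ell^1_k$ sum must sit inside the $\ell^2_j$ sum but outside the $L^2_\xi$ norm. This is precisely the structure of the norm \eqref{eq:Xsb1}, and it is why $b=\tfrac12$ works with $\ell^1$ summation in $k$ but not with $\ell^2$.
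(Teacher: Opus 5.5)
Your proposal is correct and follows the paper's proof essentially step for step. The first inequality is the triangle (Minkowski) inequality applied to the inverse Fourier transform in $\tau$. The second comes from Cauchy--Schwarz on each $\tau$-slice of $B_k$, then Minkowski in $L^2_\xi(A_j)$ to pull the $\ell^1_k$ sum outside, then the $\ell^2_j$ sum. Continuity follows from density of $\mathcal S(\R^2)$ together with the uniform bound. Your slice-measure bound $2^{k+2}$ is actually the safe one: the paper's factor $2^{(k+1)/2}$ is marginally too small for small $k$ (the $k=0$ slice has measure $2\sqrt3$), which is harmless since only an unspecified constant $C$ is claimed.
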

\begin{proof}
The first inequality is Minkowski's. For the second, by Cauchy--Schwarz in $\tau$ on each $B_k$, $\|\widehat u(\xi,\cdot)\|_{L^1_\tau(B_k)}\le 2^{(k+1)/2}\|\widehat u(\xi,\cdot)\|_{L^2_\tau(B_k)}$; summing in $k$ and taking the $L^2_\xi(A_j)$ norm with Minkowski's inequality gives $$\|\lan\xi\ran^s\widehat u\|_{L^2_\xi(A_j)L^1_\tau}\le\sqrt2\sum_k 2^{k/2}\|\lan\xi\ran^s\widehat u\|_{L^2(A_j\cap B_k)};$$ the $\ell^2_j$ sum gives the claim. Continuity in $t$ follows by density of $\mathcal S$.
\end{proof}

Let
\begin{equation}\label{eq:D}
D := \{ (\tau, \xi) \in \mathbb{R}^2 : |\xi| \leq 1,\ |\tau| \geq |\xi|^{-3} \}.
\end{equation}
Kishimoto's space $X$ and auxiliary space $Y$ are defined by the norms
\begin{equation}\label{eq:X}
\|u\|_X := \| \mathcal{F}^{-1} [\mathbf{1}_{\mathbb{R}^2 \setminus D} \hat{u}] \|_{X^{-\frac{3}{4}, \frac{1}{2}, 1}} + \| \mathcal{F}^{-1} [\mathbf{1}_D \hat{u}] \|_{X^{-\frac{3}{4}, \frac{1}{2}}},\qquad
\|u\|_Y:=\|\lan\xi\ran^{-\frac34}\widehat u\|_{L^2_\xi L^1_\tau}.
\end{equation}
Then $\|u\|_X\le 2\|u\|_{X^{-\frac34,\frac12,1}}$ (each of the two terms is bounded by $\|u\|_{X^{-\frac34,\frac12,1}}$; the constant $2$ cannot be replaced by $1$) and $\|u\|_{X^{-\frac34,\frac12}}\le\|u\|_X$. For $T>0$, $X_T$ is the space of restrictions to $\R\times[-T,T]$ of elements of $X$, with the norm
\begin{equation}\label{eq:XT}
\|u\|_{X_T} := \inf \{ \|v\|_X : v \in X,\ u(t) = v(t) \text{ for } -T \leq t \leq T \}.
\end{equation}
Finally, for a distribution $G$ we write
\begin{equation}\label{eq:Nnorm}
\|G\|_{\mathcal N}:=\big\|\F^{-1}\big[\lan\tau-\xi^3\ran^{-1}\widehat G\big]\big\|_X+\big\|\F^{-1}\big[\lan\tau-\xi^3\ran^{-1}\widehat G\big]\big\|_Y ;
\end{equation}
this is the norm in which \cite{Kishimoto} measures the nonlinearity.

\subsection{Kishimoto's estimates}
The Duhamel operator is
$$\Kc F(x,t)=\int_0^t e^{-(t-t')\pd_x^3}F(x,t')\,dt',\qquad (\pd_t+\pd_x^3)\Kc F=F,\quad \Kc F(x,0)=0 .$$
The following two results are \cite[Proposition 1.1 and Lemma 4.1]{Kishimoto}.

\begin{proposition}[Bilinear estimate, \cite{Kishimoto}]\label{prop:kishimoto}
There is $C>0$ such that for all $u,v\in X$,
$$\|\pd_x(uv)\|_{\mathcal N}\le C\|u\|_X\|v\|_X .$$
In particular the bilinear map $(u,v)\mapsto\pd_x(uv)$, defined on $\mathcal S(\R^2)$, extends continuously to $X\times X$ with values in the space of tempered distributions $G$ with $\|G\|_{\mathcal N}<\infty$.
\end{proposition}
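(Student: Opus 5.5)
The plan is not to reprove the bilinear estimate from scratch. I will reduce the statement to \cite[Proposition 1.1]{Kishimoto} and then deduce the extension claim by a density argument. The first step is bookkeeping. I will check that the dyadic sets \eqref{besov}, the exceptional region $D$ in \eqref{eq:D}, and the norms $\|\cdot\|_X$, $\|\cdot\|_Y$, $\|\cdot\|_{\mathcal N}$ in \eqref{eq:X}--\eqref{eq:Nnorm} coincide with Kishimoto's, up to equivalence of norms. Since Kishimoto's estimate concerns $\pd_x(uv)$ with his normalisation of the Fourier transform, the $2\pi$ factors in our convention only change constants, and so does the sign of the dispersion. With these identifications, the inequality $\|\pd_x(uv)\|_{\mathcal N}\le C\|u\|_X\|v\|_X$ for $u,v\in\mathcal S(\R^2)$ is exactly his result.

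If I had to reprove the estimate, I would follow Kishimoto's scheme and work by duality. I would localise $\widehat u,\widehat v$ and the output to pieces $A_j\cap B_k$, and use the resonance identity
$$\sigma-\sigma_1-\sigma_2=3\xi\xi_1\xi_2,$$
which forces at least one modulation to be $\gtrsim|\xi\xi_1\xi_2|$. I would then apply the standard bilinear Strichartz and smoothing bounds on each block. The main obstacle is the high--high$\to$low interaction at $s=-\frac34$, where the $X^{-\frac34,\frac12}$ bound diverges logarithmically \cite{NTT}. This divergence is absorbed by the $\ell^1$ summation in the modulation index $k$, which explains the Besov component of $X$. The very low output frequencies with large modulation, namely the region $D$, are where $\ell^1_k$ cannot be afforded. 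On that region one uses the plain $X^{-\frac34,\frac12}$ norm together with the $Y$ component of $\mathcal N$, which is an $L^2_\xi L^1_\tau$ norm and serves to recover $C_tH^{-3/4}$ through Lemma \ref{lem:embedding}.

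For the second sentence of the proposition, I will use that $\mathcal S(\R^2)$ is dense in $X$. This holds because $X$ is a weighted mixed $\ell^2\ell^1L^2$ space on the Fourier side, and truncating $\widehat u$ to finitely many $A_j\cap B_k$ blocks and mollifying converges in norm. Next I will note that the space $\{G:\|G\|_{\mathcal N}<\infty\}$ is complete, since it is isometric, via $G\mapsto\F^{-1}[\lan\sigma\ran^{-1}\widehat G]$, to a closed subspace of $X$ intersected with the $Y$-space. It embeds continuously in $\mathcal S'$, because weighted $L^2$ and $L^2L^1$ conditions on the Fourier side control pairings with Schwartz functions. The bilinear estimate then gives a bounded bilinear map on a dense subspace with values in a Banach space. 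The standard argument, Cauchy sequences combined with bilinearity, yields a unique continuous extension to $X\times X$ with values in tempered distributions with finite $\mathcal N$-norm.
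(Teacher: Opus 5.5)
Your proposal is correct and takes the same route as the paper. The paper gives no independent proof: it quotes the inequality directly from \cite[Proposition 1.1]{Kishimoto}, with $D$, $X$, $Y$ and $\mathcal N$ set up in Section~\ref{sec:spaces} to match Kishimoto's definitions, and it treats the extension to $X\times X$ as the routine density and completeness argument you describe, so your sketch of how to reprove the estimate is not needed.
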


\begin{lemma}[Linear estimates, \cite{Kishimoto}]\label{lem:kishimoto-linear}
For $0<T\le1$, with constants independent of $T$,
\begin{align}
\|e^{-t\pd_x^3}u_0\|_{X_T}+\sup_{|t|\le T}\|e^{-t\pd_x^3}u_0\|_{H^{-3/4}}&\lesssim\|u_0\|_{H^{-3/4}},\label{eq:K41}\\
\|\Kc G\|_{X_T}+\sup_{|t|\le T}\|\Kc G(t)\|_{H^{-3/4}}&\lesssim\|G\|_{\mathcal N}.\label{eq:K42}
\end{align}
\end{lemma}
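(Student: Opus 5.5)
The plan is to reduce both estimates in Lemma \ref{lem:kishimoto-linear} to estimates on the whole line for functions multiplied by a fixed cutoff. Fix $\psi\in C_c^\infty(\R)$ with $\psi\equiv1$ on $[-1,1]$. For $0<T\le1$, the function $\psi(t)e^{-t\pd_x^3}u_0$ agrees with $e^{-t\pd_x^3}u_0$ on $[-T,T]$, and likewise $\psi(t)\Kc G$ agrees with $\Kc G$ there. So by \eqref{eq:XT} it suffices to bound the $X$-norms of these cut-off functions. All constants then depend only on $\psi$, which gives the uniformity in $T$. The $\sup_t H^{-3/4}$ bounds for the free flow follow from unitarity of $e^{-t\pd_x^3}$ on $H^{-3/4}$. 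For the Duhamel term they will follow from Lemma \ref{lem:embedding}, i.e. from an $L^2_\xi L^1_\tau$ bound, once the Fourier transform is written out.

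\textbf{Free evolution.} We have $\F[\psi(t)e^{-t\pd_x^3}u_0](\xi,\tau)=\widehat\psi(\tau-\xi^3)\widehat u_0(\xi)$. Since $\|u\|_X\le2\|u\|_{X^{-\frac34,\frac12,1}}$, it is enough to bound the latter norm. At fixed $\xi$, the $L^2_\tau$ norm of $\lan\sig\ran^{1/2}\widehat\psi(\sig)$ over the shell $B_k$ is $\lesssim 2^{k/2}\sup_{\lan\sig\ran\sim2^k}\lan\sig\ran^{1/2}|\widehat\psi(\sig)|$. This is summable in $k$ because $\widehat\psi$ is Schwartz. Hence the norm is $\lesssim\|u_0\|_{H^{-3/4}}$, which gives \eqref{eq:K41}.

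\textbf{Duhamel term.} Write
$$\widehat{\Kc G}(\xi,t)=e^{it\xi^3}\int\frac{e^{it\sig}-1}{i\sig}\,\widehat G(\xi,\tau)\,d\tau$$
(Fourier transform in $x$ only), and split the $\tau$-integral into the regions $|\sig|\le1$ and $|\sig|>1$.

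On $|\sig|\le1$, expand $e^{it\sig}-1=\sum_{n\ge1}(it\sig)^n/n!$. This gives
$$\sum_{n\ge1}\frac{1}{n!}\,t^n\psi(t)\,e^{-t\pd_x^3}g_n,\qquad \widehat g_n(\xi)=\int_{|\sig|\le1}(i\sig)^{n-1}\widehat G\,d\tau.$$
Each summand is bounded as in the free case, with $\psi$ replaced by $t^n\psi$. The $X^{-\frac34,\frac12,1}$ norms of $t^n\psi$ grow at most like $C^n$, so the series converges. Finally $\|g_n\|_{H^{-3/4}}\le\|\lan\xi\ran^{-3/4}\lan\sig\ran^{-1}\widehat G\|_{L^2_\xi L^1_\tau}$, which is the $Y$-part of $\|G\|_{\mathcal N}$.

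On $|\sig|>1$ we get two pieces. The first is $-\psi(t)e^{-t\pd_x^3}g$ with $\widehat g=\int_{|\sig|>1}\widehat G/(i\sig)\,d\tau$. It is again a free solution, and its datum is controlled by the $Y$-norm. The second is $\psi(t)\,\F^{-1}[\mathbf 1_{|\sig|>1}\widehat G/(i\sig)]$. Since $\lan\sig\ran\sim|\sig|$ on this region, it reduces to the $X$-part of $\|G\|_{\mathcal N}$, provided we know the following fact.

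\textbf{Main obstacle.} The remaining and hardest step is to show that multiplication by $\psi(t)$ is bounded on $X$. The difficulty is not boundedness on $X^{-\frac34,\frac12,1}$ by itself. There one convolves in $\tau$ with $\widehat\psi$, and the leakage from shell $B_k$ to shell $B_{k'}$ decays rapidly in $|k-k'|$ when $k'$ is far from $k$; a standard argument (splitting by $k'\lesssim k$ or $k'\gg k$ and using Cauchy--Schwarz on shells) then gives summability. The real difficulty is the interplay with the region $D$, where only the weaker $X^{-\frac34,\frac12}$ norm is available. I would first note that for $|\xi|\le1$ and $|\tau|\ge|\xi|^{-3}$ one has $\lan\sig\ran\sim|\tau|$. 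The convolution moves $\tau$ by $O(1)$ up to Schwartz tails. So mass in $D$ is only moved to points within $O(1)$ of $\partial D$, where $|\tau|\sim|\xi|^{-3}$, or to points with large $|\tau-\tau'|$, where the Schwartz decay of $\widehat\psi$ pays for an $\ell^1_k$ sum. For the points near $\partial D$, the plan is to estimate the $\ell^1_k$ sum over the finitely many relevant shells directly by $\ell^2$, since at fixed $\xi$ essentially one shell $\lan\sig\ran\sim|\xi|^{-3}$ is involved. This mixing between $D$ and its complement is where I expect the real work to lie.
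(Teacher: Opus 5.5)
The paper does not prove this lemma; it quotes it from \cite{Kishimoto}. Your argument therefore has to stand on its own, and most of it does. The following parts are all correct:
\begin{itemize}
\item the free bound in $X^{-\frac34,\frac12,1}$;
\item the Taylor-expanded part $|\sig|\le1$;
\item the free piece $-\psi(t)e^{-t\pd_x^3}g$;
\item the $\sup_t H^{-3/4}$ bounds. For the Duhamel term, $|(e^{it\sig}-1)/(i\sig)|\lesssim\lan\sig\ran^{-1}$ for $|t|\le1$ gives the $Y$-norm directly.
\end{itemize}

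\textbf{The gap.} The step you single out as the main obstacle cannot be closed, because multiplication by $\psi(t)$ is \emph{not} bounded on $X$. Your heuristic ``at fixed $\xi$ only one shell is involved'' misreads \eqref{eq:Xsb1}. The $\ell^1_k$ sum is taken over $L^2$ norms on $A_j\cap B_k$, and the block $A_0$ contains every $|\xi|<\sqrt3$. The curve $|\tau|=|\xi|^{-3}$ bounding $D$ crosses every shell $B_k$, at $|\xi|\sim2^{-k/3}$. So leakage across $\partial D$ feeds infinitely many shells of the single block $j=0$, one per dyadic scale of $|\xi|$. Control in $\ell^2$ inside $D$ then gives no control in $\ell^1$ outside.

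Here is an explicit counterexample. Let $I_k=[2^{-(k+1)/3},2^{-k/3})$ and $R_k=\{(\tau,\xi):\xi\in I_k,\ \xi^{-3}\le\tau\le\xi^{-3}+1\}\subset D$, and set $\widehat u=\sum_{k\ge10}a_k2^{-k/3}\mathbf 1_{R_k}$.
\begin{itemize}
\item One has $\|u\|_X\sim\|a\|_{\ell^2}$ and $\|u\|_Y<\infty$.
\item On the set $\{\xi\in I_k,\ \xi^{-3}-1\le\tau<\xi^{-3}\}\subset(\R^2\setminus D)\cap A_0$, where $\lan\sig\ran\sim2^k$, we have $2\pi|\widehat{\psi u}|=a_k2^{-k/3}|\Phi(\tau-\xi^{-3})|$ with $\Phi(\mu)=\int_{\mu-1}^{\mu}\widehat\psi$. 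This $\Phi$ does not vanish identically on $[-1,0)$, since otherwise $\widehat\psi$ would be $1$-periodic.
\item Hence each block contributes $\gtrsim a_k$, so $\|\psi u\|_X\gtrsim\sum_k a_k$, which is infinite for $a\in\ell^2\setminus\ell^1$.
\end{itemize}
Now take $\widehat G=i\sig\widehat u$. Then $|\widehat W|\le|\widehat u|$, so $\|G\|_{\mathcal N}<\infty$. In your decomposition the $|\sig|\le1$ part vanishes, the free part has finite $X$-norm, and the remaining piece is exactly $\psi u$. Truncating in $k$ shows that no estimate $\|\psi(t)\Kc G\|_X\lesssim\|G\|_{\mathcal N}$ can hold. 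So your opening reduction (``it suffices to bound the cut-off functions'') replaces the lemma by a false estimate.

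\textbf{The fix.} The missing idea is to use the freedom in the restriction norm \eqref{eq:XT} instead of a global time cutoff.
\begin{itemize}
\item The nonresonant piece $\mathrm{II}:=\F^{-1}[\mathbf 1_{|\sig|>1}\widehat G/(i\sig)]$ is defined for all $t$, and your decomposition $\Kc G=(\text{$|\sig|\le1$ part})+\mathrm{II}+(\text{free part})$ holds for all $t$.
\item Hence $v:=\psi(t)\big[(\text{$|\sig|\le1$ part})+(\text{free part})\big]+\mathrm{II}$ coincides with $\Kc G$ on $[-1,1]\supset[-T,T]$, so it is an admissible extension.
\item No cutoff touches $\mathrm{II}$, and none is needed. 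The $X$-norm depends only on $|\widehat u|$ and is monotone in it, and $|\widehat{\mathrm{II}}|=\mathbf 1_{|\sig|>1}\lan\sig\ran|\sig|^{-1}|\widehat W|\le\sqrt2|\widehat W|$. Therefore $\|\mathrm{II}\|_X\le\sqrt2\|W\|_X$, with no interaction with $D$ at all.
\end{itemize}
With this single change, your remaining estimates give \eqref{eq:K42}, with constants depending only on $\psi$. The paper uses the same device in its proof of Lemma \ref{lem:trace}: term $II$ is estimated without a cutoff, and $\psi$ is applied only in $H^\ga_t$, where it is harmless by Lemma \ref{lem:cutoff}.
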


\subsection{Traces of the free group}
Let $\psi\in C_c^\infty(\R)$ with $\psi=1$ on $[-1,1]$ and $\supp\psi\subset[-2,2]$.

\begin{lemma}[\cite{KPV}, \cite{Holmer}]\label{lem:KPV}
Let $s\in\R$, $\ga=\frac{s+1}3\in[0,\frac12)$, and $\theta\in C_c^\infty(\R)$ with $\supp\theta\subset[-2,2]$. Then
$$\sup_{x\in\R}\|\theta(t)e^{-t\pd_x^3}\phi(x,\cdot)\|_{H^\ga(\R_t)}\le C\|\theta\|_{H^1(\R)}\|\phi\|_{H^s(\R)},$$
and $x\mapsto\theta(t)e^{-t\pd_x^3}\phi(x,\cdot)$ is continuous into $H^\ga(\R_t)$.
\end{lemma}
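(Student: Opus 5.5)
We prove the statement of Lemma \ref{lem:KPV} by writing the free solution as an oscillatory integral in $\xi$ and changing variables $\xi\mapsto\tau=\xi^3$, which converts the spatial Fourier variable into the time-frequency variable. For fixed $x$,
$$e^{-t\pd_x^3}\phi(x,t)=\frac1{2\pi}\int_\R e^{ix\xi}e^{it\xi^3}\widehat\phi(\xi)\,d\xi .$$
The substitution $\tau=\xi^3$, $d\xi=\frac13\tau^{-2/3}d\tau$, exhibits this function of $t$ as the inverse time-Fourier transform of
$$g_x(\tau)=\tfrac13|\tau|^{-2/3}e^{ix\tau^{1/3}}\widehat\phi(\tau^{1/3}),$$
with $\tau^{1/3}$ the real cube root. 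Then
$$\int\lan\tau\ran^{2\ga}|g_x(\tau)|^2d\tau=\frac13\int\lan\xi^3\ran^{2\ga}|\xi|^{-2}|\widehat\phi(\xi)|^2d\xi .$$
This is independent of $x$, and for $|\xi|\ge1$ it is bounded by $\int\lan\xi\ran^{6\ga-2}|\widehat\phi|^2=\int\lan\xi\ran^{2s}|\widehat\phi|^2$, since $6\ga-2=2s$. This is the exact KPV smoothing identity on the high-frequency part.

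The first step is therefore to split $\phi=P_{\rm lo}\phi+P_{\rm hi}\phi$ with a smooth Fourier cutoff at $|\xi|\sim1$. For the high part, the computation above gives $\|e^{-t\pd_x^3}P_{\rm hi}\phi(x,\cdot)\|_{\dot H^\ga\cap L^2(\R_t)}\lesssim\|\phi\|_{H^s}$ uniformly in $x$, without any cutoff. Multiplying by $\theta$ then costs $\|\theta\|_{H^{\ga+1}}$ by Lemma \ref{lem:cutoff}. To obtain only $\|\theta\|_{H^1}$, as stated, I use that $0\le\ga<\frac12$: for such $\ga$ the bound $\|\theta g\|_{H^\ga}\lesssim(\|\theta\|_{L^\infty}+\|\theta\|_{H^1})\|g\|_{H^\ga}$ holds. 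One proof is to split the convolution $\widehat\theta*\widehat g$ according to whether $|\tau'|\ge|\tau-\tau'|$, using $\lan\tau\ran^\ga\lesssim\lan\tau'\ran^\ga+\lan\tau-\tau'\ran^\ga$ and putting $\lan\cdot\ran^\ga\widehat\theta\in L^1$ (valid since $\ga<\frac12$ and $\theta\in H^1$). Together with $\|\theta\|_{L^\infty}\lesssim\|\theta\|_{H^1}$, this gives the claimed constant.

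The low-frequency part is where the weight $|\xi|^{-2}$ in the identity is singular at $\xi=0$, and I expect this to be the main technical point. Here the time cutoff is what rescues the estimate. I instead bound directly
$$\theta(t)e^{-t\pd_x^3}P_{\rm lo}\phi=\frac1{2\pi}\int_{|\xi|\lesssim1}e^{ix\xi}\widehat\phi(\xi)\,\theta(t)e^{it\xi^3}\,d\xi .$$
By Minkowski's inequality, its $H^\ga_t$ norm is at most
$$\int_{|\xi|\lesssim1}|\widehat\phi(\xi)|\,\|\theta e^{it\xi^3}\|_{H^1_t}\,d\xi\lesssim\|\theta\|_{H^1}\|\widehat\phi\|_{L^1(|\xi|\lesssim1)}\lesssim\|\theta\|_{H^1}\|\phi\|_{H^s},$$
because $|\xi^3|\lesssim1$ and Cauchy–Schwarz applies on a bounded set.

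Continuity in $x$ follows by dominated convergence. The high part in the $\tau$-representation has the factor $e^{ix\tau^{1/3}}$, whose dependence on $x$ is continuous and has modulus $1$, and the integrable majorant is $x$-independent. The low part is handled the same way in the $\xi$-integral. Multiplication by $\theta$ is a bounded operator on $H^\ga$ and so preserves this continuity.
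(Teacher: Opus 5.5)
Your proof is correct and is essentially the standard argument behind the cited result (Holmer's Lemma 5.5(b), resting on the Kenig--Ponce--Vega change of variables $\tau=\xi^3$). It treats high frequencies by the exact smoothing identity and low frequencies by Minkowski's inequality with $\|\theta e^{it\xi^3}\|_{H^1}\lesssim\|\theta\|_{H^1}$ for $|\xi|\lesssim1$, and it gets the $\|\theta\|_{H^1}$ dependence from $\|\lan\tau\ran^\ga\widehat\theta\|_{L^1}\lesssim\|\lan\tau\ran^{\ga-1}\|_{L^2}\|\theta\|_{H^1}$ for $\ga<\frac12$, which inserted directly into the proof of Lemma \ref{lem:cutoff} also makes your convolution split unnecessary; the paper itself only quotes this lemma, so there is no separate proof to compare against.
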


\section{The time trace of the Duhamel operator at $b=\frac12$}\label{sec:trace}

The following lemma is the Besov-type, $b=\frac12$ replacement of \cite[Lemma 5.6(b)]{Holmer}. Its right-hand side is precisely the quantity controlled by Proposition \ref{prop:kishimoto}. We state it for general $s$ since the proof is the same.

\begin{lemma}\label{lem:trace}
Let $-1<s<\frac12$, $\ga=\frac{s+1}3$, and let $\|\cdot\|_{\mathcal N_s}$ be defined as in \eqref{eq:Nnorm} with $-\frac34$ replaced by $s$ in \eqref{eq:X}. Then for every $G$ with $\|G\|_{\mathcal N_s}<\infty$,
$$\sup_{x\in\R}\|\psi(t)\Kc G(x,\cdot)\|_{H^\ga(\R_t)}\le C\|G\|_{\mathcal N_s},$$
and $x\mapsto\psi(t)\Kc G(x,\cdot)$ is continuous into $H^\ga(\R_t)$.
\end{lemma}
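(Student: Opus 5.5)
We follow Holmer's proof of Lemma 5.6(b) and replace the $b<\frac12$ steps by the $\ell^1$-Besov structure. By translation invariance in $x$ (multiply $\widehat G$ by $e^{ix\xi}$, which preserves $\|\cdot\|_{\mathcal N_s}$), it suffices to treat $x=0$. Continuity in $x$ then follows by density of $\mathcal S$ and dominated convergence, since the bound is uniform in $x$.

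Set $w=\F^{-1}[\lan\tau-\xi^3\ran^{-1}\widehat G]$, so $\widehat G=\lan\sig\ran\widehat w$. The standard formula gives
$$\psi(t)\Kc G(0,t)=\frac{\psi(t)}{(2\pi)^2}\iint \widehat G(\xi,\tau)\frac{e^{it\tau}-e^{it\xi^3}}{i(\tau-\xi^3)}\,d\xi\,d\tau .$$
We split $\widehat G=\widehat G_1+\widehat G_2$ with $\widehat G_1=\widehat G\,\mathbf 1_{\lan\sig\ran\le 2}$ (roughly $|\sig|\lesssim1$) and $\widehat G_2$ the rest.

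\emph{Near-characteristic part.} On $|\sig|\lesssim1$ we Taylor-expand $(e^{it\sig}-1)/(i\sig)=\sum_{n\ge1}(it)^{n}\sig^{n-1}/n!$. This gives $\psi\Kc G_1(0,t)=\sum_n \frac{t^n\psi(t)}{n!}\,e^{-t\pd_x^3}\phi_n\big|_{x=0}$ with $\widehat\phi_n(\xi)=\int\mathbf 1_{|\sig|\lesssim1}\,i^n\sig^{n-1}\widehat G(\xi,\tau)\,d\tau$. By Lemma \ref{lem:KPV} with $\theta=t^n\psi$, whose $H^1$ norm is $\lesssim n2^n$, each term is bounded by $\frac{n2^n}{n!}\|\phi_n\|_{H^s}$. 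Moreover $\|\phi_n\|_{H^s}\lesssim\|\lan\xi\ran^s\widehat w\|_{L^2_\xi L^1_\tau}\le\|w\|_{Y}$ (Lemma \ref{lem:embedding} applies equally). The series sums.

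\emph{Far part.} For $|\sig|\ge1$ we write $\frac{e^{it\tau}-e^{it\xi^3}}{i\sig}$ as $\frac{e^{it\tau}}{i\sig}-\frac{e^{it\xi^3}}{i\sig}$.

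The second piece is $\psi(t)e^{-t\pd_x^3}\phi(0)$ with $\widehat\phi(\xi)=\int_{|\sig|\ge1}\widehat G/(i\sig)\,d\tau$. Since $|\widehat G/\sig|\lesssim|\widehat w|$, we get $\|\phi\|_{H^s}\lesssim\|w\|_Y$, and Lemma \ref{lem:KPV} concludes.

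The first piece is $\psi(t)g(t)$ with $\widehat g(\tau)=\int_{|\sig|\ge1}\widehat G/(i\sig)\,d\xi$. By Lemma \ref{lem:cutoff} it suffices to bound $\|g\|_{H^\ga}$, i.e.\ to show $\big\|\lan\tau\ran^\ga\int|\widehat w(\xi,\tau)|\,d\xi\big\|_{L^2_\tau}\lesssim\|w\|_X$. This is the main obstacle, and it is where $b=\frac12$ and the $\ell^1_k$ sum enter. We decompose $\widehat w=\sum_{j,k}\widehat w\,\mathbf 1_{A_j\cap B_k}$ and, for fixed $\tau$, apply Cauchy--Schwarz in $\xi$ on $A_j\cap B_k$.

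The $\xi$-measure of $\{\xi\in A_j:\lan\tau-\xi^3\ran\sim2^k\}$ is $\lesssim\min(2^j,\,2^k2^{-2j})$.

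\emph{Case (i): $|\tau|\sim2^{3j}$, resonant.} Here $\lan\tau\ran^\ga\sim2^{(s+1)j}$ and we get
$$\lan\tau\ran^\ga\int_{A_j\cap B_k}|\widehat w|\,d\xi\lesssim 2^{(s+1)j}\,2^{k/2}2^{-j}\,\big\|\widehat w(\cdot,\tau)\big\|_{L^2_\xi(A_j\cap B_k)},$$
which is $2^{sj}2^{k/2}\|\cdot\|$. Taking $L^2_\tau$, summing in $k$ by the $\ell^1$ structure, and then in $j$ by Cauchy--Schwarz together with finite overlap of the $\tau$-ranges (for fixed $\tau$, only $O(1)$ values of $j$ are resonant) gives the bound by $\|w\|_{X^{s,\frac12,1}}$.

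\emph{Case (ii): $|\tau|\not\sim 2^{3j}$, nonresonant.} Then $2^k\gtrsim\max(|\tau|,2^{3j})$, and we use the measure bound $2^j$. This gives $\lan\tau\ran^\ga2^{j/2}\|\cdot\|_{L^2_\xi}$, and $\lan\tau\ran^\ga 2^{j/2}\le 2^{k/2}2^{sj}$ since $\ga<\frac12$ and $s>-1$. To sum over $j$ at fixed $\tau$ we spend a small power of $2^{k}$ against $\lan\tau\ran$ (using $\ga<\frac12$ strictly) and apply Cauchy--Schwarz in $j$.

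\emph{Region $D$.} On $D$ only $X^{s,\frac12}$ control is available, but $|\xi|\le1$ and $|\sig|\sim|\tau|$. We bound directly:
$$\lan\tau\ran^\ga\int_{|\xi|\le\min(1,|\tau|^{-1/3})}|\widehat w|\,d\xi\lesssim\lan\tau\ran^{\ga-\frac12}|\tau|^{-1/6}\big\|\lan\sig\ran^{1/2}\widehat w(\cdot,\tau)\big\|_{L^2_\xi},$$
using Cauchy--Schwarz on an interval of length $|\tau|^{-1/3}$. This is square integrable in $\tau$ after taking $L^2$, since $\ga<\frac12$.

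Collecting the three contributions gives the claim.
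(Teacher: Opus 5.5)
Your proposal is correct and is essentially the paper's proof. You use the same three-way split: the Taylor expansion on $|\sig|\lesssim1$, a free evolution whose data is controlled by $\|w\|_Y$, and the $e^{it\tau}/(i\sig)$ term handled through Lemma \ref{lem:cutoff}. As in the paper, the $\ell^1_k$ structure enters only on the resonant shell $|\xi|\sim|\tau|^{1/3}$, through the measure bound $2^{k-2j}$ and $\ell^2_j$ summation over $O(1)$ bands; your geometric sum over nonresonant $j$ is just a dyadic version of the paper's Cauchy--Schwarz on $R_2\cup R_3$, which uses only $X^{s,\frac12}$.

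One slip: the $\xi$-slice of $D$ at height $\tau$ is $\{|\tau|^{-1/3}\le|\xi|\le1\}$, not $\{|\xi|\le|\tau|^{-1/3}\}$. The estimate still closes, since the slice has measure $\le2$, $\lan\sig\ran\sim\lan\tau\ran$ there, and the resulting factor is $\lan\tau\ran^{\ga-\frac12}\le1$. In fact your cases (i)--(ii) already cover $D$: there $j=0$ and only $O(1)$ values of $k$ occur for each $\tau$, so only $X^{s,\frac12}$ control is used.
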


\begin{proof}
Set $W:=\F^{-1}[\lan\sig\ran^{-1}\widehat G]$, so that $\|G\|_{\mathcal N_s}=\|W\|_{X}+\|W\|_{Y}$ (with $s$ in place of $-\frac34$). By density it suffices to prove the bound for $G\in\mathcal S(\R^2)$; the continuity in $x$ then follows from the uniform bound and the explicit formulas below. Let $\psi_0\in C_c^\infty(\R)$ with $\psi_0=1$ on $[-1,1]$ and $\supp\psi_0\subset[-2,2]$, applied to the variable $\sig$. Taking the Fourier transform in $x$,
$$\mathcal{F}_x {\Kc G}(\xi,t)=\frac{1}{2\pi}\, e^{it\xi^3}\int_\R\frac{e^{it\sig}-1}{i\sig}\,\widehat G(\xi,\tau)\,d\tau ,$$
and we decompose $\Kc G=I+II+III$ with
\begin{align*}
I&=\frac{1}{(2\pi)^2}\iint e^{ix\xi+it\xi^3}\frac{e^{it\sig}-1}{i\sig}\psi_0(\sig)\widehat G\,d\tau d\xi
=\sum_{k\ge1}\frac{i^{k-1}t^k}{k!}\,e^{-t\pd_x^3}\phi_k,\\ 
&\quad \quad \widehat{\phi_k}(\xi)=\frac1{2\pi}\int\sig^{k-1}\psi_0(\sig)\widehat G\,d\tau,\\
&II=\frac{1}{(2\pi)^2}\iint e^{ix\xi+it\tau}\frac{1-\psi_0(\sig)}{i\sig}\widehat G\,d\tau d\xi,\\
&III=-\,e^{-t\pd_x^3}\phi_3,\qquad \widehat{\phi_3}(\xi)=\frac1{2\pi}\int\frac{1-\psi_0(\sig)}{i\sig}\widehat G\,d\tau .
\end{align*}

\emph{Term $I$.} By Lemma \ref{lem:KPV} with $\theta_k=t^k\psi$, $\|\theta_k\|_{H^1}\le C^k$, we get $$\|\psi I(x,\cdot)\|_{H^\ga}\le\sum_k\frac{C^k}{k!}\|\phi_k\|_{H^s},$$ and since $|\sig^{k-1}\psi_0(\sig)|\le 2^{k-1}\lan\sig\ran^{-1}\cdot\sqrt5$,
$$\|\phi_k\|_{H^s}\le C2^{k}\big\|\lan\xi\ran^s\lan\sig\ran^{-1}\widehat G\big\|_{L^2_\xi L^1_\tau}=C2^k\|W\|_Y .$$

\emph{Term $III$.} Since $(1-\psi_0(\sig))/|\sig|\le\sqrt2\lan\sig\ran^{-1}$, $\|\phi_3\|_{H^s}\le C\|W\|_Y$, and Lemma \ref{lem:KPV} gives $\|\psi\, III(x,\cdot)\|_{H^\ga}\le C\|W\|_Y$.

\emph{Term $II$.} By Lemma \ref{lem:cutoff} it suffices to bound $\|II(x,\cdot)\|_{H^\ga}$. The time Fourier transform of $II(x,\cdot)$ is $(2\pi)^{-1}F_x(\tau)$, where
$$F_x(\tau):=\int_\R e^{ix\xi}\frac{1-\psi_0(\sig)}{i\sig}\widehat G(\xi,\tau)\,d\xi .$$
All bounds below are in terms of $|\widehat G|$, hence uniform in $x$; we drop $e^{ix\xi}$. Set
$$D_\tau:=\int_\R|\widehat G(\xi,\tau)|^2\lan\xi\ran^{2s}\lan\sig\ran^{-1}d\xi,\qquad \int_\R D_\tau\,d\tau\le 2\|W\|^2_{X^{s,\frac12}}\le2\|W\|_X^2 .$$

\emph{(i) $|\tau|\le8$.} By Cauchy--Schwarz, $|F_x(\tau)|^2\le D_\tau\int\lan\xi\ran^{-2s}\lan\sig\ran^{-1}(1-\psi_0(\sig))^2d\xi$. For $|\xi|\le4$ the last integrand is bounded, and for $|\xi|\ge4$ one has $|\sig|\ge|\xi|^3-8\ge\frac12|\xi|^3$, so the integral is $\le C\int_{|\xi|\ge4}|\xi|^{-2s-3}d\xi+C\le C$ because $s>-1$. Hence $\int_{|\tau|\le8}\lan\tau\ran^{2\ga}|F_x|^2d\tau\le C\|W\|_X^2$.

\emph{(ii) $|\tau|\ge8$.} Let $r=r(\tau)$ be the real cube root of $\tau$, so $|r|\ge2$, and split $\R_\xi=R_1\cup R_2\cup R_3$,
$$R_1=\{|\xi-r|\le\tfrac12|r|\},\qquad R_3=\{|\xi|\ge\tfrac32|r|\},\qquad R_2=\R\setminus(R_1\cup R_3).$$
Note that $\xi\in R_1$ means that $\xi$ has the sign of $r$ and $\frac12|r|\le|\xi|\le\frac32|r|$. Hence a point of $R_2$ either satisfies $|\xi|<\frac12|r|$, whence $|\xi^3|<\frac18|\tau|$ and $|\sig|=|\tau-\xi^3|\ge\frac78|\tau|$, or has the sign opposite to $r$ (otherwise, having $\frac12|r|\le|\xi|<\frac32|r|$, it would lie in $R_1$), whence $\tau$ and $\xi^3$ have opposite signs and $|\sig|=|\tau|+|\xi|^3\ge|\tau|$. In both cases $|\sig|\ge\frac78|\tau|$ on $R_2$. On $R_3$, $|\sig|\ge|\xi|^3-|\tau|\ge\frac{19}{27}|\xi|^3$. On $R_1$, $\frac12|r|\le|\xi|\le\frac32|r|$, hence $\lan\xi\ran\sim|r|=|\tau|^{1/3}$ and $|\pd_\xi\sig|=3\xi^2\ge\frac34r^2$.

On $R_2$, by Cauchy--Schwarz and $s<\frac12$,
$$|F_{x,R_2}(\tau)|^2\le D_\tau\,\frac{C}{|\tau|}\int_{|\xi|\le\frac32|r|}\lan\xi\ran^{-2s}d\xi\le C D_\tau|\tau|^{-1}|\tau|^{\frac{1-2s}3}=CD_\tau\lan\tau\ran^{-2\ga},$$
since $-1+\frac{1-2s}3=-\frac{2s+2}3=-2\ga$. On $R_3$, by Cauchy--Schwarz and $s>-1$,
$$|F_{x,R_3}(\tau)|^2\le D_\tau\, C\int_{|\xi|\ge\frac32|r|}|\xi|^{-2s-3}d\xi\le CD_\tau|r|^{-2s-2}=CD_\tau\lan\tau\ran^{-2\ga}.$$
Integrating in $\tau$, the contributions of $R_2$ and $R_3$ to $\|\lan\tau\ran^\ga F_x\|_{L^2}^2$ are $\le C\|W\|_X^2$.

On $R_1$ we use the Besov structure. For $k\ge0$ let $E_k(\tau)=\{\xi\in R_1:2^k\le\lan\sig\ran<2^{k+1}\}$, and let $|E_k(\tau)|$ denote its Lebesgue measure in $\xi$. We claim that
\begin{equation}\label{eq:Ek}
|E_k(\tau)|\le \tfrac{16}{3}\,2^{k}|\tau|^{-2/3}.
\end{equation}
Indeed, the condition $2^k\le\lan\sig\ran<2^{k+1}$ means $$\sig\in J_k:=\{\sig\in\R: 2^{2k}-1\le\sig^2<2^{2k+2}-1\},$$ which is the union of two intervals (one in $\sig>0$, one in $\sig<0$) each of length at most $2^{k+1}$, so $|J_k|\le2^{k+2}$. On $R_1$ the map $\xi\mapsto\sig(\xi)=\tau-\xi^3$ is strictly monotone ($\xi$ has the sign of $r$ and $|\xi|\ge\frac12|r|$), with $|\sig'(\xi)|=3\xi^2\ge\frac34r^2=\frac34|\tau|^{2/3}$. Hence, changing variables,
$$
|E_k(\tau)|=\int_{E_k(\tau)}d\xi=\int_{\sig(E_k(\tau))}\frac{d\sig}{|\sig'(\xi(\sig))|}\le\frac{|J_k|}{\frac34|\tau|^{2/3}}\le\frac{2^{k+2}}{\frac34|\tau|^{2/3}},
$$
which is \eqref{eq:Ek}. (Geometrically, $E_k(\tau)$ consists of two intervals on either side of $r$, at distance $\approx2^k/(3r^2)$ from $r$ and of width $\approx2^k/(3r^2)$: the shell $B_k$ cut at height $\tau$. It is empty when $2^k\gg|\tau|$, since $|\sig|\le|\tau|+|\xi|^3\le\frac{35}{8}|\tau|$ on $R_1$.) Moreover $\lan\xi\ran^{-2s}\le C|\tau|^{-2s/3}$ on $R_1$, because $\frac12|r|\le|\xi|\le\frac32|r|$ and $|r|\ge2$. Hence, by Cauchy--Schwarz and \eqref{eq:Ek},
\begin{equation}
\begin{split}
\Big|\int_{E_k(\tau)}\frac{1-\psi_0(\sig)}{i\sig}\widehat G\,d\xi\Big|&\le 2^{1-k}\|\lan\xi\ran^s\widehat G(\cdot,\tau)\|_{L^2(E_k(\tau))}\Big(\int_{E_k(\tau)}\lan\xi\ran^{-2s}d\xi\Big)^{1/2}
\\&\le C2^{-k/2}|\tau|^{-\frac{s+1}3}\|\lan\xi\ran^s\widehat G(\cdot,\tau)\|_{L^2(E_k(\tau))}.
\end{split}
\end{equation}
Let $j(\tau)$ be defined by $2^{j(\tau)}\le\lan r\ran<2^{j(\tau)+1}$; then $$E_k(\tau)\times\{\tau\}\subset\bigcup_{|j-j(\tau)|\le2}(A_j\cap B_k),$$ and since $|\xi|\ge1$ on $R_1$, these sets lie outside $D$. Therefore
$$\lan\tau\ran^\ga|F_{x,R_1}(\tau)|\le C\sum_{|j-j(\tau)|\le2}\ \sum_{k\ge0}2^{-k/2}\|\lan\xi\ran^s\widehat G(\cdot,\tau)\|_{L^2_\xi(A_j\cap B_k)} .$$
Fix $j_0\ge0$. Taking the $L^2_\tau$ norm over $\{\tau:|\tau|\ge8,\ j(\tau)=j_0\}$ and using Minkowski's inequality,
$$\big\|\lan\tau\ran^\ga F_{x,R_1}\big\|_{L^2(\{j(\tau)=j_0\})}\le C\sum_{|j-j_0|\le2}\sum_{k\ge0}2^{-k/2}\|\lan\xi\ran^s\widehat G\|_{L^2_{\tau,\xi}(A_j\cap B_k)}\le C\sum_{|j-j_0|\le2}a_j,$$
where $a_j:=\sum_k2^{k/2}\|\lan\xi\ran^s\mathbf 1_{\R^2\setminus D}\widehat W\|_{L^2(A_j\cap B_k)}$; here we used $\lan\sig\ran\le2^{k+1}$ on $B_k$, so that $2^{-k/2}|\widehat G|\le 2\cdot 2^{k/2}\lan\sig\ran^{-1}|\widehat G|=2\cdot2^{k/2}|\widehat W|$. Summing the squares over $j_0$,
$$\big\|\lan\tau\ran^\ga F_{x,R_1}\big\|_{L^2(|\tau|\ge8)}^2\le C\sum_{j_0}\Big(\sum_{|j-j_0|\le2}a_j\Big)^2\le C\sum_ja_j^2\le C\|W\|_X^2 .$$
Collecting (i) and (ii), $\|II(x,\cdot)\|_{H^\ga}\le C\|W\|_X$ uniformly in $x$, which completes the proof.
\end{proof}

\begin{remark}
The only place where the $\ell^1_k$ structure is used is the region $R_1$; there the Cauchy--Schwarz argument of \cite{Holmer} produces $\int_{|\sig|\lesssim|\tau|}\lan\sig\ran^{2b-2}d\sig$, which is finite only for $b<\frac12$ and diverges logarithmically at $b=\frac12$. The decisive point is that, for fixed $\tau$, the region $R_1$ lives in a single dyadic band $A_{j(\tau)}$ (up to two neighbours), so that the bound is summed in the order $\ell^2_j\ell^1_k$ of \eqref{eq:Xsb1} and not in the larger $\ell^1_k\ell^2_j$ norm. For the derivative traces $\pd_x^j\Kc G$, $j=1,2$, the same argument requires $s>j-1$; on the right half-line only $j=0$ is needed. \new{Note also that for $b<\frac12$ one has $X^{s,-b}\hookrightarrow\mathcal N_s$, since $\sum_k2^{-k/2}a_k\le C_b(\sum_k2^{-2bk}a_k^2)^{1/2}$ and $\|\lan\xi\ran^s\lan\sig\ran^{-1}\widehat G\|_{L^2_\xi L^1_\tau}\le\|\lan\sig\ran^{b-1}\|_{L^2_\tau}\|G\|_{X^{s,-b}}$; hence Lemma \ref{lem:trace} contains \cite[Lemma 5.6(b)]{Holmer} in the range $-1<s<\frac12$. Roughly, the Duhamel estimates at $b=\frac12$ (Lemma \ref{lem:kishimoto-linear} and Lemma \ref{lem:trace}) ask less of the nonlinearity and give more on the solution than their $b<\frac12$ counterparts in \cite{Holmer}.}
\end{remark}

\section{A boundary forcing operator at the endpoint $b=\frac12$}\label{sec:L}

\subsection{Derivation of the operator}\label{sec:derivation}
Before giving the definition, we explain where the symbol comes from. Throughout this subsection $\om=e^{2\pi i/3}$ and $\rho(\tau)=(\tau-i0)^{1/3}$ as in \eqref{eq:rho}; recall $\rho(\tau)^3=\tau$ and
\begin{equation}\label{eq:rho-props}
\rho(\tau)=|\tau|^{1/3}e^{i\theta_\tau},\quad \theta_\tau\in\{0,-\tfrac\pi3\},\qquad \ima\big(\om\rho(\tau)\big)=\tfrac{\sqrt3}2|\tau|^{1/3},\qquad \re\rho(\tau)\ge\tfrac12|\tau|^{1/3} .
\end{equation}

 Let $h\in C^\infty_{0,c}(\R^+)$ and let $u$ be the solution of the linear IBVP
\begin{equation}\label{eq:linIBVP}
\begin{cases}
\pd_tu+\pd_x^3u=0, & (x,t)\in\R^+\times\R^+,\\
u(x,0)=0, & x\in\R^+,\\
u(0,t)=h(t), & t\in\R^+,
\end{cases}
\end{equation}
which for such $h$ exists, is unique among smooth solutions decaying as $x\to+\infty$ (see the uniqueness computation \cite[(1.3)]{Holmer}), and vanishes for $t\le0$, since $h$ does. (The present subsection is a derivation; all properties of the resulting operator are proved independently in Proposition \ref{prop:L}.) We call a function of $t$ \emph{causal} if it vanishes for $t<0$; by the Paley--Wiener theorem (see e.g.\ \cite[Ch.~19]{Rudin}, \cite[Thm.~7.4.3]{Hormander}) this is equivalent to its Fourier transform being the boundary value of a function holomorphic in the lower half-plane with polynomial growth. Because $u$ is causal, its partial Fourier transform in time
$$\F_tu(x,\tau):=\int_\R e^{-it\tau}u(x,t)\,dt=\int_0^\infty e^{-it\tau}u(x,t)\,dt$$
is well defined and holomorphic in $\tau\in\Pi_-:=\{\ima\tau<0\}$, with boundary values on $\R_\tau$. For causal $u$ this is the Laplace transform $\int_0^\infty e^{-st}u\,dt$ evaluated at $s=i\tau$: its half-plane of convergence $\re s>0$ is $\Pi_-$, and the boundary value $(\tau-i0)^{1/3}$ in \eqref{eq:rho} corresponds to $\re s\to0^+$. This is also the reason why no problem backwards in time is ever solved (which would be the left half-line problem, with two boundary conditions): $u$ vanishes identically for $t\le0$. For $\tau\in\Pi_-$ the equation becomes the ODE $\pd_x^3\F_tu=-i\tau\,\F_tu$, whose solutions are combinations of $e^{ikx}$ with $(ik)^3=-i\tau$, i.e.\ $k^3=\tau$, $k\in\{\rho,\om\rho,\bar\om\rho\}$ with $\rho=\tau^{1/3}$ the cube root holomorphic in $\Pi_-$, $\arg\rho\in(-\frac\pi3,0)$. Then
$$\arg\rho\in(-\tfrac\pi3,0),\qquad \arg(\om\rho)\in(\tfrac\pi3,\tfrac{2\pi}3),\qquad \arg(\bar\om\rho)\in(-\pi,-\tfrac{2\pi}3),$$
so that $\ima\rho<0$, $\ima(\bar\om\rho)<0$ and $\ima(\om\rho)>0$: for every $\tau\in\Pi_-$, $e^{i\rho x}$ and $e^{i\bar\om\rho x}$ grow exponentially as $x\to+\infty$ and $e^{i\om\rho x}$ decays. Since $\F_tu(\cdot,\tau)$ is bounded on $x>0$ and equals $\F_th(\tau)=\widehat h(\tau)$ at $x=0$, we conclude
\begin{equation}\label{eq:BSZ}
\F_tu(x,\tau)=\widehat h(\tau)\,e^{i\om\rho(\tau)x},\qquad x>0,\ \tau\in\Pi_- ,
\end{equation}
and, letting $\ima\tau\uparrow0$, the same formula on $\R_\tau$ with $\rho(\tau)=(\tau-i0)^{1/3}$ as in \eqref{eq:rho}. This is the Laplace-transform representation of \cite{BSZ1}; it is also what the unified-transform formula of \cite{Fokas1,Fokas2} reduces to for zero initial data, when the contour $\partial D^+$ is collapsed onto the real axis (compare \cite[(2.36)]{Fokas2} and \cite[Remark 2.1]{HY}).

 Conversely, given any $h\in C^\infty_{0,c}(\R^+)$, the function defined by the right-hand side of \eqref{eq:BSZ} is, for each fixed $x>0$, holomorphic and bounded in $\tau\in\Pi_-$ (by \eqref{eq:rho-props} and the computation above, $\ima(\om\rho)>0$ persists in $\Pi_-$); hence $u(x,t)=\frac1{2\pi}\int e^{it\tau}\widehat h(\tau)e^{i\om\rho x}d\tau$ vanishes for $t<0$ (Paley--Wiener in $t$; the argument is written out in the proof of Proposition \ref{prop:L}(b)). For real $\tau<0$ the branch gives $\rho=|\tau|^{1/3}e^{-i\pi/3}$, so that the labels of the three roots rotate --- the real root becomes $\bar\om\rho$ --- while the admissible root $\om\rho$ is the same analytic branch for all $\tau$.

Up to here nothing has been chosen: \eqref{eq:BSZ} is the causal solution of the linear IBVP on $x>0$, and it vanishes identically for $t\le0$. The estimates of Section \ref{sec:spaces}, however, live on $\R_x\times\R_t$, so we must extend \eqref{eq:BSZ} to $x<0$, and the whole point is the choice of the extension. Extending by zero produces a jump at $x=0$, which costs $s+3b<\frac12$ in $X^{s,b}$; the Duhamel operators of \cite{CK,Holmer} extend by left-travelling free waves emitted from $x=0$, which cost $b<\frac12$ (Remark \ref{rem:holmer}). We extend instead by a decaying profile.

Now we extend the solution for $x<0$. Fix integers $m\ge0$, $n\ge m+2$, and look for
\begin{equation}\label{eq:ext}
\F_tu(x,\tau)=\widehat h(\tau)\,g\big(\rho(\tau)x\big),\qquad g(y):=\mathbf 1_{y>0}\,e^{i\om y}+\mathbf 1_{y<0}\,P(y)\,e^{y},
\end{equation}
with $P$ a polynomial of degree $\le n-1$ (for complex $\rho$ the right-hand side is to be read as $\mathbf 1_{x>0}e^{i\om\rho x}+\mathbf 1_{x<0}P(\rho x)e^{\rho x}$). By \eqref{eq:rho-props} both pieces decay away from $x=0$, and both remain holomorphic and bounded in $\tau\in\Pi_-$, so causality survives. The freedom in $P$ is used to prescribe the regularity of the matching at $x=0$ and the vanishing of low spatial frequencies.

\begin{lemma}\label{lem:glue}
Let $m\ge0$, $n\ge m+2$, and $c=-i\om^{-m}(\om+i)^n$. There is a unique polynomial $P$ of degree $\le n-1$ such that $g$ in \eqref{eq:ext} is of class $C^{n-m-1}$ on $\R$ and has vanishing moments $\int_\R y^jg(y)\,dy=0$ for $0\le j\le m-1$. For this $P$,
\begin{equation}\label{eq:ghat}
\widehat g(\eta)=\int_\R e^{-iy\eta}g(y)\,dy=\frac{a(\eta)}{\eta-\om},\qquad a(\eta)=c\,\frac{\eta^m}{(\eta+i)^n},
\end{equation}
and $P$ is determined by $\dfrac{Q(\eta)}{(\eta+i)^n}:=\dfrac{c\,\eta^m+i(\eta+i)^n}{(\eta-\om)(\eta+i)^n}=\displaystyle\sum_{j=0}^{n-1}(-1)^jj!\,i^{j+1}\frac{p_j}{(\eta+i)^{j+1}}$, where $P(y)=\sum_{j=0}^{n-1}p_jy^j$.
\end{lemma}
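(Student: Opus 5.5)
Everything will be computed on the Fourier side. For $y>0$ the piece $e^{i\om y}$ decays, since $\ima\om=\frac{\sqrt3}2>0$, and its transform is
$$\int_0^\infty e^{-iy\eta}e^{i\om y}\,dy=\frac{1}{i(\eta-\om)}=\frac{-i}{\eta-\om}.$$
For $y<0$ we have $\int_{-\infty}^0 e^{-iy\eta}y^je^{y}\,dy=(-1)^jj!\,(1-i\eta)^{-j-1}$. Writing $1-i\eta=-i(\eta+i)$, this equals $(-1)^j j!\,i^{j+1}(\eta+i)^{-j-1}$. Hence the Fourier transform of the left piece $\mathbf 1_{y<0}P(y)e^y$ is exactly $Q(\eta)/(\eta+i)^n$ with $Q$ as displayed. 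The map $(p_0,\dots,p_{n-1})\mapsto\sum_j(-1)^jj!\,i^{j+1}p_j(\eta+i)^{-j-1}$ is a bijection onto $\{R/(\eta+i)^n:\deg R\le n-1\}$, since it is triangular with nonzero diagonal. So choosing $P$ amounts to choosing a polynomial $Q$ with $\deg Q\le n-1$, and
$$\widehat g(\eta)=\frac{-i}{\eta-\om}+\frac{Q(\eta)}{(\eta+i)^n}.$$

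Next I translate the two requirements into conditions on $\widehat g$. I will work with $\widehat g(\eta)=\frac{-i(\eta+i)^n+(\eta-\om)Q(\eta)}{(\eta-\om)(\eta+i)^n}=:\frac{N(\eta)}{(\eta-\om)(\eta+i)^n}$, where $N$ has degree $\le n$ and leading coefficient $-i+q_{n-1}$.

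\emph{Moments.} The condition $\int y^jg=0$ for $j<m$ is equivalent to $\widehat g^{(j)}(0)=0$, i.e.\ to $\eta^m\mid N$.

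\emph{Regularity.} Both pieces are smooth on each side of $0$ and decay exponentially. Hence $g\in C^{k}$ iff the jumps of $g^{(l)}$ at $0$ vanish for $l\le k$. By repeated integration by parts, $\widehat g(\eta)$ has an asymptotic expansion $\sum_{l\ge0}[g^{(l)}]\,(i\eta)^{-l-1}$ in terms of the jumps $[g^{(l)}]$. Equivalently, $g\in C^{n-m-1}$ iff $\widehat g(\eta)=O(|\eta|^{-(n-m+1)})$, i.e.\ $\deg N\le m$ (the denominator has degree $n+1$).

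Combining the two conditions forces $N=c\,\eta^m$ for a constant $c$. This is consistent, since $m\le n$ and $m\ge0$. So everything reduces to showing that there is a unique $Q$ of degree $\le n-1$ and a unique $c$ with
$$(\eta-\om)Q(\eta)=c\,\eta^m+i(\eta+i)^n.$$
This holds iff the right-hand side vanishes at $\eta=\om$, i.e.\ $c=-i\om^{-m}(\om+i)^n$, matching the stated $c$. Note $\om+i\neq0$ and $\om\neq0$. Moreover the right-hand side then has degree $n$ (because $m<n$), so $Q$ is the quotient and has degree $n-1$. This gives existence and the formula \eqref{eq:ghat} with $a(\eta)=c\eta^m/(\eta+i)^n$. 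Uniqueness follows since each step above was an equivalence and $Q$ determines $P$.

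The main obstacle is the regularity equivalence. I expect it to be the step that needs the most care. The decay $\widehat g=O(|\eta|^{-(k+2)})$ implies vanishing of the first $k+1$ jump coefficients, since the expansion is exact for piecewise exponential-polynomials. Here I will write $\widehat g$ minus the partial sum of the jump expansion as the transform of a function whose remainder is controlled. The converse direction is the same computation read backwards. Making this precise with $n\ge m+2$ (so that $C^{n-m-1}$ is at least $C^1$) is routine but must be done carefully.
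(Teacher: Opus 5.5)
Your proposal is correct and follows the paper's proof essentially step by step: Fourier transforms of the two pieces, the triangular bijection $P\leftrightarrow Q$, moments $\Leftrightarrow$ $\eta^m\mid N$, regularity $\Leftrightarrow$ $\deg N\le m$, and then $N=c\,\eta^m$ with $c$ fixed by evaluating at $\eta=\om$. Your integration-by-parts jump expansion $\widehat g\sim\sum_l[g^{(l)}](i\eta)^{-l-1}$ only spells out the regularity equivalence, which the paper states in one line.
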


\begin{proof}
Since $\ima\om>0$, $\int_0^\infty e^{-iy\eta}e^{i\om y}dy=\frac{-i}{\eta-\om}$. For $j\ge0$, substituting $y=-z$ and using $1-i\eta=-i(\eta+i)$,
$$\int_{-\infty}^0e^{-iy\eta}y^je^{y}dy=\int_0^\infty e^{-(1-i\eta)z}(-z)^jdz=\frac{(-1)^jj!}{(1-i\eta)^{j+1}}=\frac{(-1)^jj!\,i^{j+1}}{(\eta+i)^{j+1}} .$$
Hence, for any $P$ of degree $\le n-1$, $\widehat g(\eta)=\frac{-i}{\eta-\om}+\frac{Q(\eta)}{(\eta+i)^n}$ with $Q$ a polynomial of degree $\le n-1$, and conversely every such $Q$ arises from exactly one $P$ (expand $Q$ in powers of $\eta+i$). Thus
$$\widehat g(\eta)=\frac{N(\eta)}{(\eta-\om)(\eta+i)^n},\qquad N(\eta):=-i(\eta+i)^n+Q(\eta)(\eta-\om),\quad \deg N\le n .$$
Now $g$ is bounded with compactly supported singularities, so $g\in C^{n-m-1}$ with piecewise smooth pieces is equivalent to $\widehat g(\eta)=O(|\eta|^{-(n-m+1)})$ at infinity, i.e.\ to $\deg N\le m$; and $\int y^jg=0$ for $j<m$ is equivalent to $\widehat g^{(j)}(0)=0$ for $j<m$, i.e.\ to $\eta^m\mid N$. Both together mean $N(\eta)=c'\eta^m$ for a constant $c'$, and evaluating at $\eta=\om$ (where $N(\om)=-i(\om+i)^n$) gives $c'=c$. Conversely, given $c$, $Q(\eta)=\big(c\eta^m+i(\eta+i)^n\big)/(\eta-\om)$ is a polynomial of degree $n-1$ precisely because the numerator vanishes at $\om$; this determines $Q$, hence $P$, uniquely.
\end{proof}

 Let $P$ be as in Lemma \ref{lem:glue} and $\F_tu$ as in \eqref{eq:ext}. For $\tau>0$, $\rho>0$ and the change of variables $y=\rho x$ gives $\int e^{-ix\xi}g(\rho x)dx=\rho^{-1}\widehat g(\xi/\rho)$. For $\tau<0$ the same computation as in the proof of Lemma \ref{lem:glue}, now with $\int_0^\infty e^{-ix\xi}e^{i\om\rho x}dx=\frac{-i}{\rho(\zeta-\om)}$ (convergent since $\ima\om\rho>0$) and $\int_{-\infty}^0e^{-ix\xi}(\rho x)^je^{\rho x}dx=\rho^{-1}(-1)^jj!\,i^{j+1}(\zeta+i)^{-j-1}$ (convergent since $\re\rho>0$), $\zeta=\xi/\rho$, gives the same algebraic identity. In both cases
\begin{equation}\label{eq:symbol-derived}
\int_\R e^{-ix\xi}\,\F_tu(x,\tau)\,dx=\widehat h(\tau)\,\frac{1}{\rho(\tau)}\,\frac{a(\zeta)}{\zeta-\om},\qquad \zeta=\frac{\xi}{\rho(\tau)} ,
\end{equation}
which is the symbol $\widehat h(\tau)K(\xi,\tau)$ of Definition \ref{def:L} below. The two parameters have a transparent meaning: $n-m-1$ is the number of derivatives of the matching at $x=0$ (it governs the decay of $K$ as $|\xi|\to\infty$ and hence the admissible range of $s$ from above), and $m$ is the number of vanishing moments of the layer (it governs the behaviour of $K$ as $\xi\to0$ and hence the admissible range of $s$ from below; for $s<-\frac12$ one needs $m\ge1$, see Case A in the proof of Proposition \ref{prop:L}(e)). In Holmer's family both roles are played by the single parameter $\la$ in $(\xi-i0)^{-\la}$.

\subsection{Definition}
Let $\om:=e^{2\pi i/3}$, so that $\bar\om=e^{4\pi i/3}$ and $1-\zeta^3=-(\zeta-1)(\zeta-\om)(\zeta-\bar\om)$. Fix integers
\begin{equation}\label{eq:mn}
m\ge0,\qquad n\ge m+4,
\end{equation}
and define the rational function
\begin{equation}\label{eq:a}
a(\zeta):=c\,\frac{\zeta^m}{(\zeta+i)^n},\qquad c:=-i\,\om^{-m}(\om+i)^n,\qquad\text{so that}\quad a(\om)=-i .
\end{equation}
With $\rho(\tau)=(\tau-i0)^{1/3}$ as in \eqref{eq:rho}, set, for $\tau\in\R\setminus\{0\}$ and $\xi\in\R$,
\begin{equation}\label{eq:K}
\zeta=\zeta(\xi,\tau):=\frac{\xi}{\rho(\tau)},\qquad K(\xi,\tau):=\frac{1}{\rho(\tau)}\,\frac{a(\zeta)}{\zeta-\om}.
\end{equation}

\begin{definition}\label{def:L}
For $h\in C^\infty_{0,c}(\R^+)$ define (by \eqref{eq:symbol-derived}, this is the function \eqref{eq:ext} of Section \ref{sec:derivation})
\begin{equation}\label{eq:Ldef}
\Lc h(x,t):=\F^{-1}\big[\widehat h(\tau)K(\xi,\tau)\big](x,t)=\frac{1}{(2\pi)^2}\iint_{\R^2}e^{i(x\xi+t\tau)}\,\widehat h(\tau)K(\xi,\tau)\,d\xi\,d\tau .
\end{equation}
\end{definition}

The following elementary facts are used throughout. Write $\rho(\tau)=|\tau|^{1/3}e^{i\theta_\tau}$ with $\theta_\tau=0$ for $\tau>0$ and $\theta_\tau=-\pi/3$ for $\tau<0$.

\begin{lemma}\label{lem:algebra}
Let $\tau\in\R\setminus\{0\}$.
\begin{enumerate}
\item[(i)] $\tau-\xi^3=\rho(\tau)^3(1-\zeta^3)=-\rho(\tau)^3(\zeta-1)(\zeta-\om)(\zeta-\bar\om)$ for all $\xi\in\Com$.
\item[(ii)] As $\xi$ runs over $\R$, $\zeta$ runs over the line $\Gamma_\tau:=e^{-i\theta_\tau}\R$, i.e.\ $\Gamma_\tau=\R$ if $\tau>0$ and $\Gamma_\tau=e^{i\pi/3}\R$ if $\tau<0$, and $d\xi=\rho(\tau)\,d\zeta$. As $\xi$ runs over the open upper half-plane, $\zeta$ runs over the open half-plane $H_\tau$ to the left of $\Gamma_\tau$ (oriented by increasing $\xi$), namely $H_\tau=\{\arg\zeta\in(-\theta_\tau,\pi-\theta_\tau)\}$.
\item[(iii)] $\dist(\om,\Gamma_\tau)=\frac{\sqrt3}2$ and $\om\in H_\tau$; $\dist(-i,\overline{H_\tau})=\dist(-i,\Gamma_\tau)\ge\frac12$, so $-i\notin\overline{H_\tau}$.
\item[(iv)] The real root of $\tau-\xi^3$ is $\xi=\tau^{1/3}$ (real cube root); it corresponds to $\zeta=1$ if $\tau>0$ and to $\zeta=\bar\om$ if $\tau<0$. In particular $K(\cdot,\tau)$ has no singularity on $\R_\xi$, and
\begin{equation}\label{eq:Kbound}
|K(\xi,\tau)|\le C_{m,n}\,|\tau|^{-1/3}\,|\zeta|^m\lan\zeta\ran^{-n}\qquad(\xi\in\R),
\end{equation}

\end{enumerate}
\end{lemma}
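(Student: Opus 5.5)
The plan is to verify (i)--(iv) directly from the definitions $\zeta=\xi/\rho(\tau)$ and $\rho(\tau)^3=\tau$, taking the items in order, since each uses the previous ones.

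For (i), I would substitute $\xi=\rho\zeta$ to get $\tau-\xi^3=\rho^3-\rho^3\zeta^3=\rho^3(1-\zeta^3)$. The factorization $1-\zeta^3=-(\zeta-1)(\zeta-\om)(\zeta-\bar\om)$ was recorded before \eqref{eq:a}; it holds because the roots of $\zeta^3=1$ are $1,\om,\bar\om$ and the leading coefficient is $-1$.

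For (ii), writing $\rho=|\tau|^{1/3}e^{i\theta_\tau}$ gives $\zeta=|\tau|^{-1/3}e^{-i\theta_\tau}\xi$. This is a positive dilation followed by the rotation $e^{-i\theta_\tau}$, so $\R$ is mapped onto $e^{-i\theta_\tau}\R$, the orientation is preserved, and $d\xi=\rho\,d\zeta$. The upper half-plane $\{\arg\xi\in(0,\pi)\}$ is mapped to $\{\arg\zeta\in(-\theta_\tau,\pi-\theta_\tau)\}$, which is the half-plane to the left of the oriented line.

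For (iii), I would check the two cases separately. When $\tau>0$ the line is $\R$, and $\ima\om=\frac{\sqrt3}{2}>0$; the point $-i$ lies at distance $1$ from $\R$ and below it. When $\tau<0$ the line is $e^{i\pi/3}\R$ and $H_\tau=\{\arg\zeta\in(\pi/3,4\pi/3)\}$. Then $\om=e^{2\pi i/3}$ lies in $H_\tau$ at angle $\pi/3$ from the line, so its distance is $\sin(\pi/3)=\frac{\sqrt3}{2}$. The point $-i=e^{-i\pi/2}$ lies on the opposite side, at angle $5\pi/6$ from the ray $e^{i\pi/3}\R_+$ and hence at angle $\pi/6$ from the ray $-e^{i\pi/3}\R_+$, so its distance to the line is $\sin(\pi/6)=\frac12$. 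Since $-i$ is on the other side of $\Gamma_\tau$, the distance to the closed half-plane $\overline{H_\tau}$ equals the distance to $\Gamma_\tau$.

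For (iv), the real root $\xi=\tau^{1/3}$ corresponds to $\zeta=\tau^{1/3}/\rho$. This equals $1$ when $\tau>0$. When $\tau<0$ it equals $-|\tau|^{1/3}/(|\tau|^{1/3}e^{-i\pi/3})=-e^{i\pi/3}=e^{4\pi i/3}=\bar\om$. The only possible singularities of $K$ are at $\zeta=\om$ and $\zeta=-i$, and by (iii) neither lies on $\Gamma_\tau$. To get \eqref{eq:Kbound} I would argue as follows. For $\zeta\in\Gamma_\tau$, (iii) gives $|\zeta+i|\ge\frac12$, and combined with $|\zeta+i|\ge|\zeta|-1$ this yields $|\zeta+i|\gtrsim\lan\zeta\ran$. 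Similarly $|\zeta-\om|\ge\frac{\sqrt3}{2}$ and $|\zeta-\om|\ge|\zeta|-1$ give $|\zeta-\om|\gtrsim\lan\zeta\ran\ge1$. Therefore
\[
|K|\le|\tau|^{-1/3}|c|\,|\zeta|^m\,C^n\lan\zeta\ran^{-n}\lan\zeta\ran^{-1},
\]
which is stronger than the claim.

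No step is a real obstacle. The only point needing care is the angle bookkeeping in the case $\tau<0$ for (iii) and (iv), so that is where I would double-check the branch convention \eqref{eq:rho}.
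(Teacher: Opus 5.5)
Your proof is correct and is essentially the paper's argument: the substitution $\xi=\rho\zeta$ for (i), the rotation--dilation for (ii), elementary angle and distance computations for (iii), and, for (iv), $|\zeta+i|\ge\max(\frac12,|\zeta|-1)\gtrsim\lan\zeta\ran$ together with $|\zeta-\om|\ge\frac{\sqrt3}2$. The differences are cosmetic: you treat the two signs of $\tau$ separately instead of using the paper's single formula $|\sin(\phi+\theta_\tau)|$ for the distance to $\Gamma_\tau$, and by also using $|\zeta-\om|\ge|\zeta|-1$ you get the slightly stronger (unneeded) decay $\lan\zeta\ran^{-n-1}$.
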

\begin{proof}
(i) follows from $\rho^3=\tau$. (ii): $\zeta=\xi/\rho$ is a rotation by $-\theta_\tau$ followed by a dilation; $H_\tau$ is the image of $\{\ima\xi>0\}$. (iii): $\arg\om=\frac{2\pi}3\in(0,\pi)\cap(\frac\pi3,\frac{4\pi}3)$, and the distance from $\om$ to the line through $0$ with direction $e^{-i\theta_\tau}$ is $|\sin(\frac{2\pi}3+\theta_\tau)|=\frac{\sqrt3}2$ in both cases; the distance from $-i=e^{-i\pi/2}$ to the same line is $|\sin(-\frac\pi2+\theta_\tau)|\in\{1,\frac12\}$, and $\arg(-i)=\frac{3\pi}2\notin[-\theta_\tau,\pi-\theta_\tau]$. (iv): for $\tau<0$, $\tau^{1/3}=-|\tau|^{1/3}$ and $\zeta=-e^{i\pi/3}=e^{i4\pi/3}=\bar\om$. For $\xi\in\R$ we have $\zeta\in\Gamma_\tau$, so $|\zeta-\om|\ge\frac{\sqrt3}2$ by (iii). Moreover $|\zeta+i|\ge\frac12$ on $\overline{H_\tau}$, whence $|\zeta+i|\ge\max(\frac12,|\zeta|-1)\ge\frac14\lan\zeta\ran$ and
\begin{equation}\label{eq:abound}
|a(\zeta)|\le C|\zeta|^m\lan\zeta\ran^{-n}\qquad\text{on }\overline{H_\tau};
\end{equation}
\eqref{eq:Kbound} follows.
\end{proof}

\begin{remark}
The role of the two parameters is the following. The factor $(\zeta+i)^{-n}$ makes $\Lc h$ smooth in $x$ across $x=0$ (it controls the decay of $K$ as $|\xi|\to\infty$); the factor $\zeta^m$ makes the symbol vanish at $\xi=0$ and controls the low spatial frequencies, which for $s<-\frac12$ would otherwise dominate the $X^{s,\frac12}$ norm (see Case A in the proof of Proposition \ref{prop:L}(e)); it plays the role of $(\xi-i0)^{-\la}$ in Holmer's family $\Lc^\la_\pm$. Contrary to those, $K$ has no singularity on the real characteristic curve.
\end{remark}

\subsection{Properties}

\begin{proposition}\label{prop:L}
Let $m,n$ satisfy \eqref{eq:mn} and $h\in C^\infty_{0,c}(\R^+)$. Then:
\begin{enumerate}
\item[(a)] (\emph{Regularity and time traces}) $\Lc h\in C(\R^2)$, and for every $\alpha\in\R$,
$$\sup_{x\in\R}\|\Lc h(x,\cdot)\|_{H^\alpha(\R_t)}\le C\|h\|_{H^\alpha(\R)},$$
with $x\mapsto\Lc h(x,\cdot)$ continuous into $H^\alpha(\R_t)$. Moreover $\Lc h(0,t)=h(t)$ for all $t\in\R$.
\item[(b)] (\emph{Causality}) $\Lc h(x,t)=0$ for $t\le0$; in particular $\Lc h(x,0)=0$.
\item[(c)] (\emph{equation on $x>0$}) $F:=(\pd_t+\pd_x^3)\Lc h$ is a continuous function on $\R^2$ with $F(x,t)=0$ for $x>0$. Hence $\Lc h$ solves $\pd_tu+\pd_x^3u=0$ in $(0,+\infty)\times\R$.
\item[(d)] (\emph{Explicit form on $x>0$}) For $x>0$,
$$\Lc h(x,t)=\frac1{2\pi}\int_\R e^{it\tau}\,\widehat h(\tau)\,e^{ix\om\rho(\tau)}\,d\tau,\qquad |e^{ix\om\rho(\tau)}|=e^{-\frac{\sqrt3}2|\tau|^{1/3}x}.$$
\item[(e)] (\emph{Bourgain--Besov norm}) Let $s\ge-1$ and assume in addition
\begin{equation}\label{eq:mn2}
m>-s-\tfrac12,\qquad n>m+s+2,\qquad \supp h\subset[0,2].
\end{equation}
Then
$$\|\Lc h\|_{X^{s,\frac12,1}}\le C\|h\|_{H^{\frac{s+1}3}(\R)},\qquad\text{hence}\qquad \|\Lc h\|_{C(\R_t;H^s(\R_x))}\le C\|h\|_{H^{\frac{s+1}3}(\R)} .$$
\end{enumerate}
For $s=-\frac34$ one may take $m=1$, $n=5$, and then $\|\Lc h\|_X\le2\|\Lc h\|_{X^{-\frac34,\frac12,1}}\le C\|h\|_{H^{1/12}}$.
\end{proposition}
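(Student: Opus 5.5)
The plan is to handle (a)--(d) through one computation: the partial inverse Fourier transform in $\xi$ of $K(\cdot,\tau)$, done by residues. Part (e) is a pointwise estimate of $K$ on the dyadic pieces $A_j\cap B_k$.

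\emph{(a), (d): the $\xi$-integral.} Fix $\tau\ne0$ and substitute $\xi=\rho(\tau)\zeta$. By Lemma \ref{lem:algebra}(ii),
$$\frac1{2\pi}\int_\R e^{ix\xi}K(\xi,\tau)\,d\xi=\frac1{2\pi}\int_{\Gamma_\tau}e^{ix\rho\zeta}\frac{a(\zeta)}{\zeta-\om}\,d\zeta .$$
The integrand is $O(\lan\zeta\ran^{m-n-1})$, which is integrable.
\begin{itemize}
\item For $x>0$, the factor $e^{ix\rho\zeta}$ decays in $H_\tau$, the image of the upper $\xi$ half-plane. We close the contour there. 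By Lemma \ref{lem:algebra}(iii) the only pole in $H_\tau$ is $\om$, so the integral equals $i\,a(\om)e^{ix\om\rho}=e^{ix\om\rho}$.
\item For $x<0$, we close the contour on the other side. This picks up the order-$n$ pole at $-i$ and produces $P(\rho x)e^{\rho x}$, exactly as in Lemma \ref{lem:glue}.
\end{itemize}
In both cases the result is $g(\rho(\tau)x)$, with $g$ bounded, continuous and $g(0)=1$. Hence $\F_t\Lc h(x,\tau)=\widehat h(\tau)g(\rho(\tau)x)$ and $|\F_t\Lc h(x,\tau)|\le C|\widehat h(\tau)|$. This gives the $H^\alpha$ bound uniformly in $x$ and $\Lc h(0,t)=h(t)$. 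Continuity in $x$ into $H^\alpha$, and continuity of $\Lc h$ on $\R^2$, follow by dominated convergence, since $\widehat h\in\mathcal S$. Part (d) is the $x>0$ case together with \eqref{eq:rho-props}.

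\emph{(b), (c).} For (b), fix $x$. The map $\tau\mapsto g(\rho(\tau)x)$ extends holomorphically and boundedly to $\Pi_-$: both pieces decay there by \eqref{eq:rho-props}. Since $h$ is supported in $[0,\infty)$, $\widehat h$ is holomorphic in $\Pi_-$ with rapid decay on horizontal lines. For $t<0$ we shift the contour to $\ima\tau=-\ve$ and let $\ve\to\infty$; this gives $\Lc h(x,t)=0$, and $t=0$ follows by continuity. For (c), Lemma \ref{lem:algebra}(i) gives
$$\widehat F=i(\tau-\xi^3)\widehat hK=-i\rho(\tau)^2\,\widehat h(\tau)\,(\zeta-1)(\zeta-\bar\om)\,a(\zeta).$$
This expression has no pole in $\overline{H_\tau}$ and is $O(|\tau|^{2/3}\lan\zeta\ran^{m+2-n})$. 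Since $n\ge m+4$, it is integrable in $\xi$ with total mass $\lesssim|\tau||\widehat h|$, so $F$ is continuous. For $x>0$, closing the contour in $H_\tau$ gives $F(x,t)=0$.

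\emph{(e): the main step.} We bound $\|\lan\xi\ran^s\lan\sig\ran^{1/2}\widehat hK\|_{L^2(A_j\cap B_k)}$ using \eqref{eq:Kbound}, the factorisation $|\sig|=|\tau||1-\zeta^3|$, and $|\widehat h|\le\sqrt2\|h\|_{L^2}$, which follows from $\supp h\subset[0,2]$. We split into three regions.
\begin{itemize}
\item \emph{Near the real root}, $|\zeta|\sim1$ and $|\tau|\sim2^{3j}$. Here $K$ is bounded by $|\tau|^{-1/3}$; this is the key point, that $K$ has no singularity on the characteristic curve. As in \eqref{eq:Ek}, the shell $B_k$ at height $\tau$ has $\xi$-measure $\lesssim2^k|\tau|^{-2/3}$. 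This gives the bound
$$2^k\,2^{js}\,2^{-2j}\,\|\widehat h\|_{L^2(|\tau|\sim2^{3j})}\qquad\text{for }2^k\lesssim2^{3j}.$$
Summing this geometric series in $k$ gives $2^{3j\ga}\|\widehat h\|_{L^2(|\tau|\sim 2^{3j})}$ with $\ga=\frac{s+1}3$, and the $\ell^2_j$ sum is $\lesssim\|h\|_{H^\ga}$.
\item \emph{Case A, $|\xi|\ll|\tau|^{1/3}$.} Here $\lan\sig\ran\sim\lan\tau\ran\sim2^k$ and $|K|\lesssim|\tau|^{-1/3}|\xi/\rho|^m$. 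For a fixed $j$ the $\tau$-integral spans all $k$. In the worst band $j=0$ we get $\sum_k2^{k(\frac12-\frac13-\frac m3)}\|\widehat h\|_{L^2(|\tau|\sim2^k)}$. This is bounded by $\|h\|_{H^\ga}$ via Cauchy--Schwarz exactly when $\frac16-\frac m3<\ga$, i.e.\ $m>-s-\frac12$. Higher bands carry the additional factor $2^{j(m+s+\frac12)}$, which is summable up to $2^j\sim|\tau|^{1/3}$.
\item \emph{Large $\zeta$, $|\xi|\gg|\tau|^{1/3}$.} Here $\lan\sig\ran\sim|\xi|^3$ and $|K|\lesssim|\tau|^{(n-m-1)/3}|\xi|^{m-n}$. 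The condition $n>m+s+2$ makes the sums in $j$ and $k$ converge; this covers $|\tau|\lesssim1$ as well.
\end{itemize}
Adding the three regions gives $\|\Lc h\|_{X^{s,\frac12,1}}\lesssim\|h\|_{H^\ga}$. Lemma \ref{lem:embedding} then yields the $C_tH^s$ bound. For $s=-\frac34$, the choice $m=1$, $n=5$ satisfies \eqref{eq:mn} and \eqref{eq:mn2}.

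I expect the main obstacle to be the Case A bookkeeping in low frequency: it is the only place where the $\ell^1_k$ sum is not dominated by a single scale, and it is exactly where the threshold $m>-s-\frac12$ (hence the vanishing moments) is needed.
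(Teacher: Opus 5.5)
Your proposal is correct and follows the paper's proof essentially step by step. It uses contour integration in $\zeta$ for (a), (c), (d) and a contour shift into $\{\ima\tau<0\}$ for (b); you shift after the $\xi$-integration using $g(\rho(\tau)x)$ on both sides of $x=0$, while the paper shifts for fixed $\xi$, which is inessential. For (e) you use the same three regimes as the paper's Cases A, B, C, with $|\widehat h|\le\sqrt2\|h\|_{L^2}$ entering, as in the paper, on the block $|\xi|\lesssim1$, $|\tau|<2$, where $\int|K|^2\,d\xi\sim|\tau|^{-1/3}$. When you write out Case A, keep the $j$-sum in $\ell^2_j$ and close it with a Schur test on the off-diagonal factor $2^{-\delta_1(l-3j)}$, $\delta_1=\frac{2s+2m+1}{6}$, as the paper does. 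Summing $2^{j(m+s+\frac12)}$ over $j$ at fixed $\tau$ would only give $\sum_l 2^{\gamma l}\|\widehat h\|_{L^2(|\tau|\sim 2^l)}$, which $\|h\|_{H^\gamma}$ does not control.
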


\begin{proof}
Since $\widehat h\in\mathcal S(\R)$ and, by \eqref{eq:Kbound} and the change of variables $\xi=\rho\zeta$,
\begin{equation}\label{eq:L1}
\int_\R|K(\xi,\tau)|\,d\xi\le C|\tau|^{-1/3}\int_{\Gamma_\tau}|\zeta|^m\lan\zeta\ran^{-n}|\rho|\,|d\zeta|=C_1<\infty
\end{equation}
uniformly in $\tau\ne0$ (because $n\ge m+2$), the integrand in \eqref{eq:Ldef} is in $L^1(\R^2)$ and $\Lc h$ is a bounded continuous function.

\emph{(a)} Define
\begin{equation}\label{eq:kappa}
\kappa(x,\tau):=\frac1{2\pi}\int_\R e^{ix\xi}K(\xi,\tau)\,d\xi=\frac1{2\pi}\int_{\Gamma_\tau}e^{ix\rho(\tau)\zeta}\,\frac{a(\zeta)}{\zeta-\om}\,d\zeta ,
\end{equation}
where we used $d\xi=\rho\,d\zeta$ and $\rho\cdot\rho^{-1}=1$. On $\Gamma_\tau$, $\rho\zeta=\xi\in\R$, so $|e^{ix\rho\zeta}|=1$ and by Lemma \ref{lem:algebra}(iii) $|\kappa(x,\tau)|\le C_0$ for all $(x,\tau)$. By Fubini, $$\Lc h(x,t)=\frac1{2\pi}\int e^{it\tau}\widehat h(\tau)\kappa(x,\tau)d\tau,$$ then  $\mathcal F_t\Lc h(x,\cdot)=\widehat h(\tau)\kappa(x,\tau)$, whence $\|\Lc h(x,\cdot)\|_{H^\alpha}\le C_0\|h\|_{H^\alpha}$. Continuity in $x$: $\kappa(x,\tau)\to\kappa(x_0,\tau)$ for each $\tau$ by dominated convergence in \eqref{eq:kappa}, and then $\int\lan\tau\ran^{2\alpha}|\widehat h|^2|\kappa(x,\tau)-\kappa(x_0,\tau)|^2d\tau\to0$ by dominated convergence again.

For the trace, we compute $\kappa(0,\tau)$ by residues. The function $\zeta\mapsto a(\zeta)/(\zeta-\om)$ is meromorphic with a simple pole at $\om$ and a pole at $-i$, and is $O(|\zeta|^{m-n-1})=O(|\zeta|^{-2})$ at infinity. By Lemma \ref{lem:algebra}(iii), $\om\in H_\tau$ and $-i\notin\overline{H_\tau}$. Closing $\Gamma_\tau$ by a large arc in $H_\tau$ (which lies to the left of the oriented line, so the contour is positively oriented) we get
$$\kappa(0,\tau)=\frac1{2\pi}\,2\pi i\,\Res_{\zeta=\om}\frac{a(\zeta)}{\zeta-\om}=i\,a(\om)=1 .$$
Hence $\Lc h(0,t)=\frac1{2\pi}\int e^{it\tau}\widehat h(\tau)d\tau=h(t)$.

\emph{(b)} Fix $\xi\in\R$. For $z$ in the open lower half-plane $\Pi_-=\{\ima z<0\}$ put $\rho(z)=z^{1/3}$, $\zeta=\xi/\rho(z)$ and $K(\xi,z)=\rho(z)^{-1}a(\zeta)/(\zeta-\om)$. Since $\arg\rho(z)\in(-\frac\pi3,0)$, we have $\arg\zeta\in(0,\frac\pi3)$ if $\xi>0$ and $\arg\zeta\in(\pi,\frac{4\pi}3)$ if $\xi<0$; in both cases $\zeta\ne\om,-i$, so $z\mapsto K(\xi,z)$ is holomorphic in $\Pi_-$, continuous on $\overline{\Pi_-}\setminus\{0\}$, and $|K(\xi,z)|\le C|z|^{-1/3}$ (because $|\zeta|^m\lan\zeta\ran^{-n}\le1$ and the distances of $\om$, $-i$ to the sectors above are positive). Since $\supp h\subset[0,\infty)$, $\widehat h(z)=\int_0^\infty e^{-itz}h(t)dt$ is holomorphic in $\Pi_-$, continuous on $\overline{\Pi_-}$, and integrating by parts $N$ times, $|\widehat h(z)|\le C_N(1+|z|)^{-N}$ on $\overline{\Pi_-}$. For $t\le0$ and $z\in\overline{\Pi_-}$, $|e^{itz}|=e^{-t\ima z}\le1$. Apply Cauchy's theorem to $z\mapsto e^{itz}\widehat h(z)K(\xi,z)$ on the region $\{\ve<|z|<R,\ \ima z<0\}$: the integral over the large semicircle is $O(R\cdot R^{-N}R^{-1/3})\to0$, that over the small semicircle is $O(\ve\cdot\ve^{-1/3})\to0$, and we conclude $\int_\R e^{it\tau}\widehat h(\tau)K(\xi,\tau)d\tau=0$ for every $\xi$ and every $t\le0$. Integrating in $\xi$ (Fubini) gives $\Lc h(x,t)=0$ for $t\le0$.

\emph{(c)} Since $(\pd_t+\pd_x^3)^\wedge=i(\tau-\xi^3)$, Lemma \ref{lem:algebra}(i) and $\rho^3/\rho=\rho^2=(\tau-i0)^{2/3}$ give
\begin{equation}\label{eq:Fhat}
\begin{split}
&\widehat F(\xi,\tau)=i(\tau-\xi^3)\widehat h(\tau)K(\xi,\tau)=-i\,\widehat h(\tau)\,(\tau-i0)^{2/3}\,b(\zeta),\\& b(\zeta):=a(\zeta)(\zeta-1)(\zeta-\bar\om)=c\,\frac{\zeta^m(\zeta-1)(\zeta-\bar\om)}{(\zeta+i)^n}.
\end{split}
\end{equation}
By Lemma \ref{lem:algebra}(iii), $|\zeta+i|\ge\frac14\lan\zeta\ran$ on $\overline{H_\tau}$, hence $|b(\zeta)|\le C\lan\zeta\ran^{m+2-n}\le C\lan\zeta\ran^{-2}$ on $\overline{H_\tau}$. Consequently $\widehat F(\cdot,\tau)\in L^1(\R)$ with $$\int|\widehat F(\xi,\tau)|d\xi\le C|\widehat h(\tau)||\tau|^{2/3}|\rho(\tau)|=C|\widehat h(\tau)||\tau|,$$ so $\widehat F\in L^1(\R^2)$ and $F=\F^{-1}\widehat F$ is a bounded continuous function with $(\pd_t+\pd_x^3)\Lc h=F$ in $\mathcal S'(\R^2)$. Fix $\tau\ne0$ and $x>0$. The function $\xi\mapsto\widehat F(\xi,\tau)$ extends holomorphically to $\{\ima\xi>0\}$ (as $\zeta=\xi/\rho$ then lies in $H_\tau$, where $b$ is holomorphic), with $|\widehat F(\xi+i\eta,\tau)|\le C|\widehat h(\tau)||\tau|^{2/3}\lan(\xi+i\eta)/\rho\ran^{-2}\le C|\widehat h(\tau)||\tau|^{2/3}\lan\xi/\rho\ran^{-2}$ uniformly in $\eta\ge0$. By Cauchy's theorem applied to the rectangle with vertices $\pm R$, $\pm R+i\eta$, letting $R\to\infty$ (the vertical sides are $O(R^{-2})$),
\begin{equation*}\begin{split}
\int_\R e^{ix\xi}\widehat F(\xi,\tau)d\xi&=\int_\R e^{ix(\xi+i\eta)}\widehat F(\xi+i\eta,\tau)d\xi,\\
\Big|\int_\R e^{ix(\xi+i\eta)}\widehat F(\xi+i\eta,\tau)d\xi\Big|&\le e^{-x\eta}\,C|\widehat h(\tau)||\tau| .
\end{split}\end{equation*}
Letting $\eta\to+\infty$ we get $\int_\R e^{ix\xi}\widehat F(\xi,\tau)d\xi=0$ for $x>0$, $\tau\ne0$, and therefore $F(x,t)=\frac1{(2\pi)^2}\int e^{it\tau}\big(\int e^{ix\xi}\widehat F(\xi,\tau)d\xi\big)d\tau=0$ for $x>0$.

\emph{(d)} For $x>0$ and $\zeta\in\overline{H_\tau}$, $\rho\zeta$ lies in the closed upper half-plane, so $|e^{ix\rho\zeta}|\le1$; as $a(\zeta)/(\zeta-\om)=O(|\zeta|^{-2})$, we may close $\Gamma_\tau$ in \eqref{eq:kappa} by a large arc in $H_\tau$ and obtain $\kappa(x,\tau)=i\,a(\om)e^{ix\rho\om}=e^{ix\om\rho(\tau)}$. Finally $\ima(\om\rho(\tau))=|\tau|^{1/3}\sin(\frac{2\pi}3+\theta_\tau)=\frac{\sqrt3}2|\tau|^{1/3}$ in both cases $\theta_\tau\in\{0,-\frac\pi3\}$.

\emph{(e)} Write $W:=\widehat h\,K=\widehat{\Lc h}$ and, for $l\ge1$, $S_l:=\{\tau:2^l\le|\tau|<2^{l+1}\}$, $S_0:=\{|\tau|<2\}$, $\widehat h_l:=\mathbf 1_{S_l}\widehat h$. Set $\beta:=\frac{s+1}3$ and $c_l:=2^{\beta l}\|\widehat h_l\|_{L^2}$, so that $\sum_{l\ge0}c_l^2\le C\|h\|^2_{H^\beta}$. For $j,k\ge0$ and $\tau\in\R$ let $B_k(\tau):=\{\xi:(\tau,\xi)\in B_k\}$ and
$$M_{jk}(\tau):=\|\lan\xi\ran^s\lan\sig\ran^{\frac12}K(\cdot,\tau)\|_{L^2_\xi(A_j\cap B_k(\tau))}.$$
Since the sets $A_j\cap B_k\cap(\R\times S_l)$, $l\ge0$, are disjoint and cover $A_j\cap B_k$, the triangle inequality in $L^2$ gives $\|F\|_{L^2(A_j\cap B_k)}\le\sum_{l\ge0}\|F\|_{L^2(A_j\cap B_k\cap(\R\times S_l))}$ for every $F$; and since $|W|=|\widehat h_l||K|$ on $\R\times S_l$, $$\|\lan\xi\ran^s\lan\sig\ran^{1/2}W\|_{L^2(A_j\cap B_k\cap(\R\times S_l))}\le\|\widehat h_l\|_{L^2}\sup_{\tau\in S_l}M_{jk}(\tau).$$ Summing over $k$, separating $l=0$ from $l\ge1$ and using $(a+b)^2\le2a^2+2b^2$, we have
\begin{equation}\label{eq:assemble}
\|\Lc h\|_{X^{s,\frac12,1}}^2\le 2\sum_{j\ge0}\Big(\sum_{l\ge1}\|\widehat h_l\|_{L^2}\,\Theta_{jl}\Big)^2+2\sum_{j\ge2}\|\widehat h_0\|_{L^2}^2\,\Theta_{j0}^2+2R_0,\qquad \Theta_{jl}:=\sum_{k\ge0}\sup_{\tau\in S_l}M_{jk}(\tau),
\end{equation}
where $R_0:=\sum_{j\in\{0,1\}}\big(\sum_k\|\lan\xi\ran^s\lan\sig\ran^{\frac12}W\|_{L^2(A_j\cap B_k\cap(\R\times S_0))}\big)^2$ collects the two blocks $j\in\{0,1\}$, $l=0$, for which $\sup_{\tau\in S_0}M_{jk}(\tau)=+\infty$ (because $|K(\cdot,\tau)|\sim|\tau|^{-1/3}$ on $|\xi|\lesssim|\tau|^{1/3}$) and which are therefore estimated directly below. We bound $\Theta_{jl}$ using \eqref{eq:Kbound} and $|\zeta|=|\xi||\tau|^{-1/3}$. Recall $2^{j-1}\le|\xi|<2^{j+1}$ on $A_j$, $j\ge1$, and $|\xi|<\sqrt3$ on $A_0$. Let $\delta_1:=\frac{2s+2m+1}6>0$ and $\delta_2:=\frac{n-m-s-2}3>0$ by \eqref{eq:mn2}.

\emph{Case A: $l\ge3j+4$.} Then $|\xi|^3<2^{3j+3}\le\frac12|\tau|$ on $A_j$, so $\frac12|\tau|\le|\sig|\le\frac32|\tau|$ and $M_{jk}(\tau)=0$ unless $|k-l|\le2$. Also $|\zeta|\le 2^{j+1}2^{-l/3}\le1$, so $|K|\le C|\tau|^{-1/3}(2^{j+1}2^{-l/3})^m$ and, using $|A_j|\le2^{j+2}$,
$$M_{jk}(\tau)\le C\,2^{sj}\,2^{l/2}\,2^{-l/3}\,2^{(j-l/3)m}\,2^{j/2}=C\,2^{(s+m+\frac12)j}\,2^{(\frac16-\frac m3)l}=C\,2^{\beta l}\,2^{-\delta_1(l-3j)} ,$$
where the last identity is checked by comparing exponents: $$(s+m+\frac12)j+(\frac16-\frac m3)l-\beta l=-\frac{2s+2m+1}{6}(l-3j).$$ Hence $\Theta_{jl}\le C2^{\beta l}2^{-\delta_1(l-3j)}$.

\emph{Case B: $1\le l\le3j-5$.} Then $|\tau|<2^{l+1}\le2^{3j-4}\le\frac12|\xi|^3$ on $A_j$, so $\frac12|\xi|^3\le|\sig|\le\frac32|\xi|^3$ and $M_{jk}(\tau)=0$ unless $|k-3j|\le4$. Also $|\zeta|\ge2^{j-1}2^{-(l+1)/3}\ge1$, so $|K|\le C|\tau|^{-1/3}|\zeta|^{m-n}\le C\,2^{(m-n)j}\,2^{(n-m-1)l/3}$, and, with $\lan\sig\ran^{1/2}\le C2^{3j/2}$,
\begin{equation*}\begin{split}M_{jk}(\tau)&\le C\,2^{sj}\,2^{3j/2}\,2^{(m-n)j}\,2^{(n-m-1)l/3}\,2^{j/2}\\&=C\,2^{(s+2+m-n)j}\,2^{(n-m-1)l/3}=C\,2^{\beta l}\,2^{-\delta_2(3j-l)},\end{split}\end{equation*}
since $(s+2+m-n)j+\frac{n-m-1}3l-\beta l=-\frac{n-m-s-2}3(3j-l)$. Hence $\Theta_{jl}\le C2^{\beta l}2^{-\delta_2(3j-l)}$.

\emph{Case C: $3j-5<l<3j+4$, $l\ge1$} (at most $8$ values of $l$ for each $j$, and $j\ge1$ here unless $l\le3$). On $A_j\times S_l$ we have $\lan\sig\ran\le1+|\tau|+|\xi|^3<1+2^{l+1}+2^{3j+3}<2^{l+8}$ (as $3j+3\le l+7$), so $M_{jk}=0$ for $k>l+7$; $|K|\le C|\tau|^{-1/3}\le C2^{-l/3}$ by \eqref{eq:Kbound} and $m\le n$; and for $j\ge1$, since $\xi\mapsto\tau-\xi^3$ is monotone on $A_j\cap\{\pm\xi>0\}$ with $|\pd_\xi(\tau-\xi^3)|\ge3\cdot2^{2j-2}$, the set $A_j\cap B_k(\tau)$ (on which $|\sig|$ lies in an interval of length $2^{k+2}$) has measure $\le C2^{k-2j}$. Therefore
\begin{equation*}\begin{split}M_{jk}(\tau)&\le C\,2^{sj}\,2^{k/2}\,2^{-l/3}\,(2^{k-2j})^{1/2}=C\,2^{(s-1)j}\,2^{k}\,2^{-l/3},\\
\Theta_{jl}&\le C\,2^{(s-1)j}\,2^{-l/3}\sum_{k\le l+7}2^k\le C\,2^{(s-1)j}\,2^{2l/3}\le C\,2^{\beta l},\end{split}\end{equation*}
using $2^j\sim2^{l/3}$. For $j=0$ and $l\le3$ the same bound holds trivially since $|K|\le C|\tau|^{-1/3}\le C$ on $S_l$, $l\ge1$, and $k\le 10$.

\emph{Low temporal frequencies: $l=0$.} For $j\ge2$ and $|\tau|<2$ we are in the situation of Case B ($|\tau|<2\le\frac12|\xi|^3$), and $|K|\le C|\tau|^{-1/3}(|\xi||\tau|^{-1/3})^{m-n}=C|\xi|^{m-n}|\tau|^{(n-m-1)/3}\le C|\xi|^{m-n}$ because $n-m-1\ge0$; thus $\Theta_{j0}\le C2^{(s+2+m-n)j}$, which is square-summable in $j$ by \eqref{eq:mn2}. For $j\in\{0,1\}$ and $|\tau|<2$ we have $|\xi|<\sqrt{15}$, hence $\lan\sig\ran<1+2+15^{3/2}<64$ and only $k\le5$ occur; thus the $\ell^1_k$ sum in $R_0$ has at most six terms and $\sum_{k\le5}\|F\|_{L^2(A_j\cap B_k\cap(\R\times S_0))}\le6\|F\|_{L^2(A_j\times S_0)}$, so that $R_0\le C\int_{S_0}|\widehat h(\tau)|^2\int_{|\xi|<\sqrt{15}}|K(\xi,\tau)|^2d\xi\,d\tau$. By \eqref{eq:Kbound} and the change of variables $\xi=\rho\zeta$,
$$\int_{\R}|K(\xi,\tau)|^2d\xi\le C|\tau|^{-2/3}\int_{\Gamma_\tau}|\zeta|^{2m}\lan\zeta\ran^{-2n}|\rho|\,|d\zeta|=C|\tau|^{-1/3}.$$
Hence $$R_0\leq C\int_{|\tau|<2}|\widehat h(\tau)|^2|\tau|^{-1/3}d\tau\le C\|\widehat h\|^2_{L^\infty}\le C\|h\|^2_{L^1}\le C\|h\|_{L^2}^2,$$ where we used $\supp h\subset[0,2]$; and $\sum_{j\ge2}\|\widehat h_0\|^2_{L^2}\Theta_{j0}^2\le C\sum_{j\ge2}2^{2(s+2+m-n)j}\|h\|^2_{L^2}\le C\|h\|_{L^2}^2$.

\emph{Summation.} Inserting the bounds of Cases A--C into \eqref{eq:assemble} for $l\ge1$, we obtain
$$\sum_{j\ge0}\Big(\sum_{l\ge1}\|\widehat h_l\|_{L^2}\Theta_{jl}\Big)^2\le C\sum_{j\ge0}\Big(\sum_{l\ge1}\theta_{jl}\,c_l\Big)^2,\qquad
\theta_{jl}:=\begin{cases}2^{-\delta_1(l-3j)},& l\ge3j+4,\\ 2^{-\delta_2(3j-l)},& l\le3j-5,\\ 1,&\text{otherwise.}\end{cases}$$
Since $\sup_l\sum_j\theta_{jl}+\sup_j\sum_l\theta_{jl}\le C(\delta_1,\delta_2)$, then $$\sum_j(\sum_l\theta_{jl}c_l)^2\le C\sum_lc_l^2\le C\|h\|^2_{H^\beta}.$$ Together with the $l=0$ contribution this proves $\|\Lc h\|_{X^{s,\frac12,1}}\le C\|h\|_{H^\beta}$ (note $\|h\|_{L^2}\le\|h\|_{H^\beta}$ since $\beta\ge0$). The $C_tH^s$ bound follows from Lemma \ref{lem:embedding}. For $s=-\frac34$, \eqref{eq:mn} and \eqref{eq:mn2} read $m>\frac14$, $n>m+\frac54$, $n\ge m+4$, satisfied by $m=1$, $n=5$.
\end{proof}

\begin{remark}\label{rem:support}
The support assumption on $h$ in (e) is not a technicality: $\Lc$ is not bounded from $H^\beta(\R)$ into $X^{s,\frac12}$. Indeed $\int_\R|K(\xi,\tau)|^2d\xi=C|\tau|^{-1/3}$, so for $\widehat h=\ve^{-1/2}\mathbf 1_{|\tau|<\ve}$ one has $\|h\|_{H^\beta}\sim1$ while $\|\Lc h\|_{X^{s,\frac12}}\gtrsim\ve^{-1/6}$. The mechanism is physical: a boundary datum of temporal frequency $\tau$ produces a layer of width $|\tau|^{-1/3}$, which for $|\tau|\to0$ is not square integrable in $x$ uniformly. Since the fixed point only requires boundary values on a bounded time interval, restricting to $\supp h\subset[0,2]$ costs nothing (Section \ref{sec:proof}).
\end{remark}

\begin{corollary}\label{cor:Lext}
Let $s\in[-1,\frac12)$, $\beta=\frac{s+1}3\in[0,\frac12)$, and $m,n$ as in \eqref{eq:mn}, \eqref{eq:mn2}. The operator $\Lc$ extends uniquely to a bounded operator from $\{h\in H^\beta(\R):\supp h\subset[0,2]\}$ into $X^{s,\frac12,1}\cap C(\R_t;H^s(\R_x))\cap C(\R_x;H^\beta(\R_t))$, and for every such $h$: $\Lc h(\cdot,0)=0$ in $H^s(\R)$; $\Lc h(0,\cdot)=h$ in $H^\beta(\R)$; and $(\pd_t+\pd_x^3)\Lc h=0$ in the sense of distributions on $(0,+\infty)\times\R$.
\end{corollary}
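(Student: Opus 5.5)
The corollary is a density-and-continuity argument built on Proposition \ref{prop:L}. Let $V_\beta:=\{h\in H^\beta(\R):\supp h\subset[0,2]\}$, a closed subspace of $H^\beta(\R)$.

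First we need a dense class of smooth data inside $V_\beta$. We show that $V_\beta\cap C^\infty_{0,c}(\R^+)$ is dense in $V_\beta$. Since $\beta\in[0,\frac12)$, we use a shrink-and-mollify argument. Given $h\in V_\beta$, set $h_\lambda(t)=h(\lambda t)$ with $\lambda>1$ close to $1$, which is supported in $[0,2/\lambda]$ and satisfies $h_\lambda\to h$ in $H^\beta$. Next translate, $h_\lambda(\cdot-\ve)$, to move the support into $[\ve,2/\lambda+\ve]$. Finally mollify at a scale smaller than $\ve$, so that the support stays inside $(0,2]$. Each step converges in $H^\beta$, by strong continuity of dilations and translations and standard mollifier convergence. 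We then define $\Lc$ on $V_\beta$ as the unique continuous extension from this dense class, using the three bounds of Proposition \ref{prop:L}:
\begin{itemize}
\item (e) gives $\|\Lc h\|_{X^{s,\frac12,1}}\le C\|h\|_{H^\beta}$;
\item (e) together with Lemma \ref{lem:embedding} gives the $C(\R_t;H^s)$ bound;
\item (a) with $\alpha=\beta$ gives $\sup_x\|\Lc h(x,\cdot)\|_{H^\beta}\le C\|h\|_{H^\beta}$.
\end{itemize}
Each of the three target spaces is complete. For the last one, note that continuity in $x$ with values in $H^\beta(\R_t)$ passes to uniform limits. Hence $\Lc h_n$ is Cauchy in the intersection space, and the limit does not depend on the approximating sequence. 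For $s=-1$ we need $m>\frac12$ and $n>m+1$, which is compatible with \eqref{eq:mn}, so the hypotheses of (e) are available in the whole range.

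Second, we verify that the three identities pass to the limit. Evaluation at $t=0$ is continuous from $C(\R_t;H^s)$ into $H^s$, and $\Lc h_n(\cdot,0)=0$ by (b), so $\Lc h(\cdot,0)=0$. Evaluation at $x=0$ is continuous from $C(\R_x;H^\beta(\R_t))$ into $H^\beta$, and $\Lc h_n(0,\cdot)=h_n\to h$ by (a), so $\Lc h(0,\cdot)=h$. For the equation we use convergence in $\mathcal S'(\R^2)$: convergence in $X^{s,\frac12,1}\subset X^{s,\frac12}$ implies convergence in $\mathcal S'$, because $\lan\xi\ran^{-s}\lan\sig\ran^{-1/2}\widehat\varphi$ is in $L^2$ for any Schwartz $\varphi$. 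Hence $(\pd_t+\pd_x^3)\Lc h_n\to(\pd_t+\pd_x^3)\Lc h$ in $\mathcal D'$. Each $(\pd_t+\pd_x^3)\Lc h_n$ vanishes on $(0,\infty)\times\R$ by (c), so the limit does as well.

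The main obstacle is the density step, and it is why the range $\beta<\frac12$ (equivalently $s<\frac12$) enters. We must approximate data supported in $[0,2]$ by smooth data compactly supported in $(0,2]$ without losing the support constraint required in (e). The dilation--translation--mollification argument above avoids any appeal to traces at the endpoints. If it turned out to be delicate, I would instead use the fact that $C_c^\infty((0,2))$ is dense in $\{h\in H^\beta:\supp h\subset[0,2]\}$ for $\beta<\frac12$. That fact follows from Lemma \ref{lem:JK}, which says multiplication by $\chi_{(0,\infty)}$ is bounded on $H^\beta$ for $|\beta|<\frac12$, combined with approximation of indicators. Uniqueness of the extension is immediate from density and boundedness.
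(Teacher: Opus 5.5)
Your proposal is correct and follows the paper's proof: density of smooth data supported in $[0,2]$, then the extension via the bounds of Proposition \ref{prop:L}(a),(e) and Lemma \ref{lem:embedding}, then passage of (b), (a), (c) to the limit in $C_tH^s$, $C_xH^\beta_t$ and $\mathcal D'$, respectively. Your dilation--translation--mollification argument is a self-contained substitute for the paper's appeal to Holmer's density result and Lemma \ref{lem:JK}, and it works for every $\beta\in\R$ if you take $\ve<1-\lambda^{-1}$ so the right endpoint stays below $2$; so, contrary to your closing remark, the density step is not where $\beta<\frac12$ is needed.
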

\begin{proof}
For $\beta<\frac12$ the space $C^\infty_{c}((0,2))$ is dense in $\{h\in H^\beta:\supp h\subset[0,2]\}$ (\cite[Section 4]{Holmer}, Lemma \ref{lem:JK}). The bounds of Proposition \ref{prop:L}(a),(e) give the extension; (b), (a) and (c) pass to the limit in $C_tH^s$, in $C_xH^\beta_t$ and in $\mathcal D'((0,\infty)\times\R)$ respectively.
\end{proof}

\begin{remark}[Comparison with the operators of \cite{CK,Holmer}, \cite{BSZ2} and \cite{ET,HY,CT,GK}]\label{rem:holmer}
All boundary forcing operators in the literature agree with $\Lc$ on $x>0$, by uniqueness for the linear problem (compare \cite[Lemma 8.1]{Holmer}): there they reduce to the Laplace-transform formula of Proposition \ref{prop:L}(d). They differ only in the continuation to $x<0$, and this is what decides the admissible $b$.

\emph{Point sources.} Holmer's $\Lc^\la_+$ \cite[Section 3]{Holmer} (and the operator of \cite{CK}) is a Duhamel integral of a source $x_-^{\la-1}/\Gamma(\la)$ at the boundary --- the classical boundary potential of Cattabriga \cite{Cat}, used by Faminskii \cite{Fa2} in Bourgain-type spaces to obtain global well-posedness for $s\ge0$. Its space-time Fourier transform is, up to a free solution, $(\xi-i0)^{-\la}(\tau-i0)^{\frac\la3+\frac23}\widehat h(\tau)(\tau-\xi^3)^{-1}$ (proof of Lemma 5.8(d) there): a simple pole at the real root $\xi=\tau^{1/3}$ with nonvanishing residue, so that $\int\lan\xi\ran^{2s}\lan\sig\ran^{2b}|\cdot|^2d\xi$ converges only for $b<\frac12$ and the $\ell^1_k$ sum in \eqref{eq:Xsb1} diverges at $b=\frac12$. In physical space, $\Lc^\la_+h$ contains left-travelling free waves emitted from $x=0$. In \eqref{eq:K} the real root is not a pole and the trace is carried by the complex root $\om\rho(\tau)$.

\emph{Reflections.} The operators $BI^{m1},BI^{m2}$ of \cite{BSZ2} extend the solution on $x>0$ by even and odd reflections, combined through the cutoff $\Theta(\xi,\tau)=\chi(|\xi|-\delta|\tau|^{1/3})$ and a corrector $\omega(\tau)$ fixed by the implicit relation (2.22) there. Our computation in the proof of Proposition \ref{prop:L}(e) reproduces their results exactly:
\begin{enumerate}
\item[(i)] At fixed $\tau$, for $s>-\frac12$ and all $b$ for which the integral converges, $$\int\lan\xi\ran^{2s}\lan\sig\ran^{2b}|K|^2d\xi\sim|\tau|^{\frac{2s-1}3+2b},$$ i.e.\ $\|\Lc h\|_{X^{s,b}}\sim\|h\|_{H^{\frac{3b+s-1/2}3}}$ --- the regularity of Theorem 1.1 in \cite{BSZ2}, sharp in $b$, with $b=\frac12$ the value at which the datum has its natural regularity $\frac{s+1}3$. Convergence at $|\xi|\to\infty$ requires $n-m>s+2$, i.e.\ a $C^{n-m-1}$ junction at $x=0$ (Lemma \ref{lem:glue}); this is the role played in $BI^{m1}$ by the corrector $\omega(\tau)$, which removes a jump $O(\xi^{-1})$, and the restriction $b<\frac12-\frac s3$ there.
\item[(ii)] For $s<-\frac12$ and $m=0$ the region $|\xi|\le1$ dominates and yields only $\|h\|_{H^{\frac{3b-1}3}}$ --- the estimate of Appendix II of \cite{BSZ2} for the even extension $BI^e$, and the reason $BI^{m1}$ is introduced there. In $BI^{m1}$ the even part is switched off for $|\xi|\lesssim\delta|\tau|^{1/3}$ by $\Theta$; in $\Lc$ the same effect is produced by the factor $\zeta^m$, i.e.\ by the $m$ vanishing moments of the profile $g$ in \eqref{eq:ext}.
\end{enumerate}
Thus, for $-\frac34<s\le0$, Theorem 1.1 of \cite{BSZ2} and Proposition \ref{prop:L}(e) with $\ell^2_k$ in place of $\ell^1_k$ are the same statement. What \cite{BSZ2} does not address is the $\ell^1_k$ summability, which in the proof of Proposition \ref{prop:L}(e) comes from the pointwise bound \eqref{eq:Kbound} alone; the rational form of $K$ is what makes \eqref{eq:Kbound}, the trace identity and the support property elementary consequences of residue calculus.

\emph{Cutoffs.} A third family, going back to \cite{ET}, multiplies the decaying mode by a smooth cutoff $\rho(a_I\xi x)$ at the scale of the layer: Himonas--Yan \cite{HY} for KdV with Robin data, Compaan--Tzirakis \cite{CT} for the quartic gKdV, Gallego--Kwak \cite{GK} for higher-order KdV. The symbol is $|\tau|^{-1/3}\widehat\eta(\xi/(a_I|\tau|^{1/3}))\widehat h(\tau)$ with $\eta\in\mathcal S(\R)$ (times $\xi$ for Robin data): the structure of \eqref{eq:K} with a Schwartz profile in place of the rational one, and $|\widehat\eta(\xi/(a_I|\tau|^{1/3}))|\le C_N(|\tau|^{1/3}/(|\xi|+|\tau|^{1/3}))^N$ \cite[Lemma 2.2]{HY} is the counterpart of \eqref{eq:Kbound}. The resulting $X^{s,b}$ bounds \cite[Lemma 2.3]{HY}, \cite[Lemma 3.6]{CT}, \cite[Lemma 4.3]{GK} have the exponent of (i). The Besov norm is not considered in these works; the nonlinear arguments of \cite{HY,GK} stay at $b<\frac12$ with the correction $D_\alpha$ and the temporal space $Y^{s,-b}$ of \cite{Holmer}, and that of \cite{CT} places a linear part in $X^{s_0,b}$, $b>\frac12$, $s_0<s$, recovering the loss by nonlinear smoothing, which vanishes for KdV at $s=-\frac34$.

\end{remark}

\section{Proof of Theorem \ref{thm:main}}\label{sec:proof}

Throughout $s=-\frac34$, $\ga=\frac1{12}$, and $\Lc$ is the operator of Definition \ref{def:L} with $m=1$, $n=5$. Let $\psi\in C_c^\infty(\R)$, $\psi=1$ on $[-1,1]$, $\supp\psi\subset[-2,2]$, and let $\psi_2\in C^\infty_c(\R)$ with $\psi_2=1$ on $[0,1]$ and $\supp\psi_2\subset[-1,2]$.

Let $Z:=X_1\cap C([-1,1];H^{-3/4}(\R_x))\cap C(\R^+;H^{1/12}(\R_t))$ with norm $$\|u\|_Z:=\|u\|_{X_1}+\|u\|_{C([0,1];H^{-3/4}(\R^+))}+\|u\|_{C(\R^+;H^{1/12}(0,1))}.$$ For $u\in X_1$ choose an extension $\tilde u\in X$ with $\|\tilde u\|_X\le2\|u\|_{X_1}$; the restriction of $\Kc\pd_x(\tilde u^2)$ to $|t|\le1$ does not depend on the extension, and we simply write $\Kc\pd_x(u^2)$.

Given $u_0\in H^{-3/4}(\R^+)$ and $f\in H^{1/12}(\R^+)$, fix extensions $\tilde u_0\in H^{-3/4}(\R)$, $\tilde f\in H^{1/12}(\R)$ with norms at most twice those of $u_0$, $f$. For $u\in Z$ define
\begin{align}
h(u)&:=\chi_{(0,+\infty)}(t)\,\psi_2(t)\Big[\tilde f(t)-\psi(t)e^{-t\pd_x^3}\tilde u_0\big|_{x=0}+\psi(t)\Kc\pd_x(u^2)\big|_{x=0}\Big],\label{eq:h}\\
\Lambda u&:=\psi(t)e^{-t\pd_x^3}\tilde u_0-\psi(t)\Kc\pd_x(u^2)+\Lc h(u).\label{eq:Lambda}
\end{align}
The traces at $x=0$ in \eqref{eq:h} are those of Lemma \ref{lem:KPV} and Lemma \ref{lem:trace}, both continuous in $x$ with values in $H^{1/12}(\R_t)$. Note that $\supp h(u)\subset[0,2]$.

By Lemma \ref{lem:JK} ($\frac1{12}<\frac12$) and Lemma \ref{lem:cutoff}, multiplication by $\chi_{(0,\infty)}\psi_2$ is bounded on $H^{1/12}(\R)$. By Lemma \ref{lem:KPV}, $\|\psi e^{-t\pd_x^3}\tilde u_0|_{x=0}\|_{H^{1/12}}\le C\|\tilde u_0\|_{H^{-3/4}}$. By Lemma \ref{lem:trace} and Proposition \ref{prop:kishimoto}, $\|\psi\Kc\pd_x(\tilde u^2)|_{x=0}\|_{H^{1/12}}\le C\|\pd_x(\tilde u^2)\|_{\mathcal N}\le C\|\tilde u\|_X^2\le C\|u\|^2_{X_1}$. Thus we have that 
\begin{equation}\label{lem:hbound_1}
\|h(u)\|_{H^{1/12}(\R)}\le C\big(\|f\|_{H^{1/12}(\R^+)}+\|u_0\|_{H^{-3/4}(\R^+)}+\|u\|_Z^2\big).
\end{equation}

Similarly,  we obtain the difference estimate, writing $u^2-v^2=(u-v)(u+v)$,
\begin{equation}\label{lem:hbound_2}
\|h(u)-h(v)\|_{H^{1/12}(\R)}\le C\|u-v\|_Z\big(\|u\|_Z+\|v\|_Z\big).
\end{equation}

By Proposition \ref{prop:L}(e) and Corollary \ref{cor:Lext} (with $s=-\frac34$, $\supp h(u)\subset[0,2]$) and \eqref{lem:hbound_1}, \begin{equation}\label{esti_lh}
\begin{split}
\|\Lc h(u)\|_Z&\le\|\Lc h(u)\|_{X}+\|\Lc h(u)\|_{C_tH^{-3/4}}\le C\|h(u)\|_{H^{1/12}}\\&\le C\big(\|f\|_{H^{1/12}(\R^+)}+\|u_0\|_{H^{-3/4}(\R^+)}+\|u\|_Z^2\big).
\end{split}
\end{equation}

Thus, by using Lemma \ref{lem:kishimoto-linear}, Proposition \ref{prop:kishimoto} and \eqref{esti_lh} we get

\begin{equation}\label{cont_1}
\|\Lambda u\|_Z\le C\big(\|u_0\|_{H^{-3/4}(\R^+)}+\|f\|_{H^{1/12}(\R^+)}+\|u\|_Z^2\big)\end{equation}

Similarly, using the linearity of $\Lc$ and \eqref{lem:hbound_2}, \begin{equation}\label{cont2}\|\Lambda u-\Lambda v\|_Z\le C\|u-v\|_Z\big(\|u\|_Z+\|v\|_Z\big).\end{equation}

By using \eqref{cont_1} and \eqref{cont2}, if $$\|u_0\|_{H^{-3/4}(\R^+)}+\|f\|_{H^{1/12}(\R^+)}\le\delta$$ with $\delta$ small enough, $\Lambda$ maps the closed ball of radius $2C\delta$ in $Z$ into itself and is a contraction there. Let $u\in Z$ be its fixed point. The map $(u_0,f)\mapsto u$ is Lipschitz from $B_\delta:=\{\|u_0\|+\|f\|\le\delta\}$ into $Z$ by the standard argument, and analytic, since $\Lambda$ is a polynomial in $u$ with linear dependence on the data.

\medskip\noindent Finally, we claim that $u$ is a solution in the sense of Definition \ref{def:solution} with $T=1$.
(a) $u\in X_1$ by construction. (b) On $\R\times(-1,1)$, where $\psi=1$, $(\pd_t+\pd_x^3)\big(\psi e^{-t\pd_x^3}\tilde u_0\big)=0$ and $(\pd_t+\pd_x^3)\big(\psi\Kc\pd_x(\tilde u^2)\big)=\pd_x(\tilde u^2)=\pd_x(u^2)$ in $\mathcal D'(\R\times(-1,1))$ (the identity $(\pd_t+\pd_x^3)\Kc G=G$ holds in $\mathcal S'$ for Schwartz $G$ and extends by density using \eqref{eq:K42} and Proposition \ref{prop:kishimoto}). By Corollary \ref{cor:Lext}, $(\pd_t+\pd_x^3)\Lc h(u)=0$ in $\mathcal D'((0,\infty)\times\R)$. Hence $\pd_tu+\pd_x^3u+\pd_x(u^2)=0$ in $\mathcal D'((0,\infty)\times(0,1))$. (c) $u\in C([-1,1];H^{-3/4}(\R))$ since each term is (Lemma \ref{lem:kishimoto-linear}, Corollary \ref{cor:Lext}), and $u(\cdot,0)=\tilde u_0+0+\Lc h(u)(\cdot,0)=\tilde u_0$, whose restriction to $\R^+$ is $u_0$. (d) Each term of \eqref{eq:Lambda} belongs to $C(\R_x;H^{1/12}(\R_t))$ (Lemma \ref{lem:KPV}, Lemma \ref{lem:trace}, Corollary \ref{cor:Lext}), and at $x=0$,
$$u(0,t)=\psi e^{-t\pd_x^3}\tilde u_0|_{x=0}-\psi\Kc\pd_x(u^2)|_{x=0}+h(u)(t)=\tilde f(t)\qquad\text{for }0<t<1,$$
because $\chi_{(0,\infty)}\psi_2=1$ on $(0,1)$; thus $u(0,\cdot)=f$ in $H^{1/12}(0,1)$.

Finally, the solution of Theorem \ref{thm:main} is the restriction of the fixed point $u$ to $\R^{+}\times[0,1]$.

Finally, the result for larger data follows by scaling, exactly as in \cite{Holmer}. This completes the proof of Theorem 1.2.


\section*{Acknowledgments}
M. Cavalcante acknowledges the kind hospitality of the Université de Tours during a sabbatical stay funded by the CAPES/COFECUB Program (grant no.~88887.879175/2023-00), during which part of this work was carried out, and thanks Luc Molinet for fruitful discussions on various topics during this visit. Finally, he acknowledges the support of CNPq, grant no.~306348/2025-0.

\end{document}